\newcommand{\Tr}{\mathrm{Tr}}
\newcommand{\norm}[1]{\lVert#1\rVert}
\newcommand{\bra}[1]{\langle#1\vert}
\newcommand{\ket}[1]{\vert#1\rangle}
\newcommand{\ie}{\textit{i.e.}{~}}
\newcommand{\eg}{\textit{e.g.}{~}}
\newcommand{\Or}{\mathcal{O}}
\newcommand{\RR}{\mathbb{R}}
\newcommand{\re}{r_{\ve}}
\newcommand{\dre}{\dot{r}_{\ve}}
\newcommand{\ddre}{\ddot{r}_{\ve}}
\newcommand{\xe}{x_{\ve}}
\newcommand{\dxe}{\dot{x}_{\ve}}
\newcommand{\ddxe}{\ddot{x}_{\ve}}
\newcommand{\ret}{r_{\xi}}
\newcommand{\dret}{\dot{r}_{\xi}}
\newcommand{\ddret}{\ddot{r}_{\xi}}
\newcommand{\pet}{p_{\xi}}
\newcommand{\dpet}{\dot{p}_{\xi}}
\newcommand{\xet}{x_{\xi}}
\newcommand{\dxet}{\dot{x}_{\xi}}
\newcommand{\ddxet}{\ddot{x}_{\xi}}
\newcommand{\yet}{y_{\xi}}
\newcommand{\dyet}{\dot{y}_{\xi}}
\newcommand{\rave}{\overline{r}}
\newcommand{\drave}{\dot{\overline{r}}}
\newcommand{\pave}{\overline{p}}
\newcommand{\dpave}{\dot{\overline{p}}}
\numberwithin{equation}{section}
\numberwithin{figure}{section}
\newtheorem{thm}{\protect\theoremname}
\newtheorem{prop}[thm]{\protect\propositionname}
\providecommand{\corollaryname}{Corollary}
\providecommand{\lemmaname}{Lemma}
\providecommand{\propositionname}{Proposition}
\providecommand{\remarkname}{Remark}
\providecommand{\theoremname}{Theorem}
\title{Convergence of Stochastic-extended Lagrangian molecular
dynamics method for polarizable force field simulation}
\author{Dong An
\thanks{Department of Mathematics, University of California, Berkeley,
CA 94720. Email: \texttt{dong\_an@berkeley.edu}}
\and
Sara Y. Cheng
\thanks{Department of Chemistry, University of California, Berkeley,
CA 94720. 
Email: \texttt{sycheng@berkeley.edu}}
\and
Teresa Head-Gordon
\thanks{Department of Chemistry, Bioengineering, and Chemical and Biomolecular Engineering, University of California, Berkeley,
and Chemical Sciences Division, Lawrence Berkeley National Laboratory,
CA 94720. 
Email: \texttt{thg@berkeley.edu}}
\and
Lin Lin
\thanks{Department of Mathematics, University of California, Berkeley, and Computational Research Division, Lawrence Berkeley National Laboratory, Berkeley, CA 94720. Email: \texttt{linlin@math.berkeley.edu}}
\and
Jianfeng Lu
\thanks{
Department of Mathematics, Department of Physics, and Department of
Chemistry,
Duke University, Durham, NC 27708, USA. Email: {\tt jianfeng@math.duke.edu}
}
}
\begin{document}

\global\long\def\ve{\varepsilon}
\global\long\def\R{\mathbb{R}}
\global\long\def\Rn{\mathbb{R}^{n}}
\global\long\def\Rd{\mathbb{R}^{d}}
\global\long\def\E{\mathbb{E}}
\global\long\def\P{\mathbb{P}}
\global\long\def\bx{\mathbf{x}}
\global\long\def\vp{\varphi}
\global\long\def\ra{\rightarrow}
\global\long\def\smooth{C^{\infty}}
\global\long\def\Tr{\mathrm{Tr}}
\global\long\def\bra#1{\left\langle #1\right|}
\global\long\def\ket#1{\left|#1\right\rangle }

\maketitle

\begin{abstract}
Extended Lagrangian molecular dynamics (XLMD) is a general method for performing molecular dynamics simulations using quantum and classical many-body potentials. Recently several new XLMD schemes have been proposed and tested on several classes of many-body polarization models such as induced dipoles or Drude charges, by creating an auxiliary set of these same degrees of freedom that are
reversibly integrated through time. This gives rise to a
singularly perturbed Hamiltonian system that provides a good
approximation to the time evolution of the real mutual polarization field. To
further improve upon the accuracy of the XLMD dynamics, and to
potentially extend it to other many-body potentials,
we introduce a stochastic modification which leads to a set of singularly perturbed Langevin
equations with degenerate noise. We prove that the resulting Stochastic-XLMD
converges to the accurate dynamics, and the convergence rate is
both optimal and is independent of the accuracy of the initial polarization field. We carefully study the scaling of the damping factor and
numerical noise for efficient numerical simulation for Stochastic-XLMD,
and we demonstrate the effectiveness of the method for model polarizable
force field systems.
\end{abstract}

\begin{keywords}
Extended Lagrangian; Molecular dynamics; Polarizable force field;
Singularly perturbed system; Hamiltonian system; Langevin dynamics
\end{keywords}

\section{Introduction}

Molecular dynamics (MD) simulations often require solving a linear or nonlinear system repeatedly for certain latent variables.  For 
\textit{ab initio} molecular dynamics simulations~\cite{Martin2004}, the latent variable is the electron density. At each MD step, the electron density needs to be obtained by the self-consistent solution of the Kohn-Sham
equations~\cite{HohenbergKohn1964,KohnSham1965}, which
are a set of nonlinear eigenvalue equations. In classical
molecular dynamics simulation with a polarizable force field~\cite{Demerdash2014,Albaugh2016}, it is the induced dipole or Drude charge that needs to be evaluated through the solution of
a linear system, typically solved to self-consistency for large systems. 

In a simplified mathematical setting, the problem can be stated as
follows. Let $r\in \R^{d}$ be the collection of atomic positions, and
$x\in \R^{d'}$ be the collection of latent variables such as the induced
dipoles. Let $U(r)$ be a smooth external potential
field involving only the atomic positions, which gives the external force
\[F(r) = - \frac{\partial U}{\partial r}(r){.} \]
Let $Q(r,x)$ be the
interaction energy involving both the atomic position and the latent
variable, and we assume $Q$ is smooth. 
For a given $r$, the latent variable $x$ is determined by the following equation
\begin{equation}\label{eqn:general_algebraic}
 \frac{\partial Q}{\partial x}(r,x)=0.
\end{equation}
We assume the solution to \eqref{eqn:general_algebraic} is unique for all $r\in\R^d$.  
The molecular dynamics simulation requires the solution of the following differential-algebraic equations (DAE) system 
\begin{subequations}\label{eqn:General_Dynamics}
\begin{align}
    \ddot{r}_{\star}(t) &= F(r_{\star}(t)) - \frac{\partial Q}{\partial
    r}(r_{\star}(t),x_{\star}(t)), \label{eqn:General_Dynamics_a}\\
    0 &= - \frac{\partial Q}{\partial x}(r_{\star}(t),x_{\star}(t)), \label{eqn:General_Dynamics_b}
\end{align}
\end{subequations}
subject to certain initial conditions
$r_{\star}(0),\dot{r}_{\star}(0)$.  Here the subscript $\star$ is used to
indicate the exact solution of Eq.~\eqref{eqn:General_Dynamics}.  Note
that the initial condition for $x$ is not needed since it can be
determined from $r_{\star}(0)$ through
Eq.~\eqref{eqn:General_Dynamics_b} (recall that a unique solution is
assumed). To simplify the notation, we assume the mass is unity for
all atomic degrees of freedom. Unless otherwise specified, we shall
drop the explicit dependence on the time variable $t$ below, and
without loss of generality we assume $d'=d$.

The polarizable force field simulation in classical molecular dynamics
is an interesting and a particularly suitable case for analysis, since
$Q(r,x)$ becomes just a quadratic function with respect to the polarization field $x$: 
\begin{equation}\label{eqn:Hamiltonian_Quadratic}
    Q(r,x) = \frac{1}{2}x^{\top} A(r)x - b(r)^{\top}x.
\end{equation}
Here for each $r$, $A(r)$ is a positive definite matrix, with its smallest 
eigenvalue uniformly  bounded above $0$. Hence the solution $x(r)$ is unique for all $r$.
We also assume the mappings $b:\R^{d}\ra\R^{d}$ and
$A:\R^{d}\ra\mathcal{S}_{++}^{d}$ are smooth.
Eq.~\eqref{eqn:general_algebraic} is then reduced to a simple linear
equation
\begin{equation}
  A(r) x = b(r).
  \label{eqn:dipole_equation}
\end{equation}
This will greatly simplify our analysis in the results below. 

Eq.~\eqref{eqn:General_Dynamics_b} or~\eqref{eqn:dipole_equation} is an algebraic system that needs to be solved at each MD time step. In molecular dynamics simulation, we are generally more interested in the accuracy of the trajectory of atoms $r(t)$ than that of the latent variables $x(t)$. In the past decade, new types of 
integrators called the extended Lagrangian Born-Oppenheimer molecular dynamics (XL-BOMD) method~\cite{Niklasson2008}
(initially called the time reversible molecular dynamics (TRMD)
method~\cite{Niklasson2006}) have been developed. The main idea of
XL-BOMD is to write down an extended Lagrangian for the latent variable. Instead of being solved through an algebraic system at each time
step, the latent variables are \textit{evolved} together with the atomic positions. XL-BOMD
differs from previous extended Lagrangian molecular dynamics (XLMD)
integration schemes such as Car-Parrinello molecular
dynamics~\cite{CarParrinello1985} by eliminating the coupling or mass
parameter of the latent variables. Numerical results demonstrate that
this strategy can significantly reduce the number of self-consistent
iterations~\cite{Niklasson2006,Niklasson2008,AlbaughDemerdashHead-Gordon2015},
and in some cases fully eliminate the need for performing
self-consistent iteration altogether~\cite{Niklasson2012,AlbaughNiklassonHead-Gordon2017,AlbaughHead-Gordon2017}.

Following Eq.~\eqref{eqn:General_Dynamics}, the extended Lagrangian for
the XL-BOMD approach takes the general form
\begin{equation}
  L_{\ve} = \frac12 |\dre|^2 + \frac{\varepsilon }{2} |\dxe|^2 - U(\re) -
  Q(\re,\xe).
  \label{eqn:extended_lagrangian}
\end{equation}
 The corresponding Euler-Lagrange
equation yields
\begin{equation}\label{eqn:General_Deterministic}
  \begin{split}
    \ddre &= F(\re) - \frac{\partial Q}{\partial r}(\re,\xe), \\
    \varepsilon \ddxe &= -\frac{\partial Q}{\partial x}(\re,\xe).
  \end{split}
\end{equation}
Note that initial conditions for $\xe$ and $\dxe$ are needed for
\eqref{eqn:General_Deterministic}: $\xe(0)$ is often prescribed by
solving the algebraic equation
$0 = - \partial Q/\partial x(\re(0), \xe(0))$ and $\dxe(0)$ can be obtained 
by differentiating Eq.~\eqref{eqn:General_Dynamics_b} and then let $t = 0$. 
Eq.~\eqref{eqn:General_Deterministic} is a Hamiltonian system, and it
can be discretized with symplectic or time-reversible integrators to
obtain long time stability~\cite{HairerLubichWanner2006}.  When $\ve$
is sufficiently small, we may expect that the solution of
Eq.~\eqref{eqn:General_Deterministic} to closely follow the exact
dynamics.  On the other hand, the value of $\sqrt{\ve}$ (which may
include an additional multiplicative factor that can be viewed as a mixing parameter) provides an upper
bound of the time step of the numerical
integrator~\cite{Niklasson2008,AlbaughNiklassonHead-Gordon2017,AlbaughHead-Gordon2017}.
Therefore it is desirable to choose $\ve$ not too small in practice.
Although Eq.~\eqref{eqn:General_Deterministic} introduces a systematic
error in terms of $\ve$ per step, hence sacrificing the accuracy of
$x(t)$ to some degree, with a properly chosen $\ve$, XL-BOMD often
outperforms the discretized original dynamics in terms of efficiency
and long time stability while still maintaining the accuracy for
$r(t)$.

From a mathematical point of view, the equations of motion
\eqref{eqn:General_Deterministic} can be viewed as a set of singularly
perturbed equations. To the best of our knowledge, the convergence of
the general XL-BOMD schemes~\eqref{eqn:General_Deterministic} as
$\ve\to 0$ has not been established other than in the linear response
regime~\cite{LinLuShao2014}, where the coupled system can be exactly
diagonalized. It is difficult to generalize the analysis to nonlinear systems. Another
issue associated with Eq.~\eqref{eqn:General_Deterministic} is that
the equation is free of dissipation. Hence numerical error introduced
by the initial condition for $\xe$ as well as external perturbation
during the simulation will be memorized throughout the simulation. To
overcome this problem, a number of approaches have been
developed. Niklasson and co-workers have added well-designed
dissipation terms to the dynamics, and though often effective, they
necessarily break time reversibility~\cite{Niklasson2009}. Albaugh et
al. have instead introduced Nose-Hoover thermostats for the latent
variables, which greatly improves the robustness of XL-BOMD since the
extended system thermostat variables can also evolve with
time-reversible
integration~\cite{AlbaughDemerdashHead-Gordon2015}. With careful
consideration of extended system thermostat formulations or
dissipation that is time-reversible to high order, the resulting
numerical schemes for XL-BOMD can be highly competitive for MD
simulations, e.g. with polarizable force fields.

In this paper we consider an alternative way to account for the needed
fluctuation and dissipation by introducing a stochastic thermostat  through the
following modified XL-BOMD scheme: 
\begin{subequations}\label{eqn:General_Stochastic}
\begin{align}
    \ddret =& F(\ret) - \frac{\partial Q}{\partial r}(\ret,\xet),
    \label{eqn:General_Stochastic_a}\\
    \ve \ddxet =& - \frac{\partial Q}{\partial x}(\ret,\xet) 
    - \sqrt{\ve}\gamma \dxet + \sqrt{2\gamma T}\ve^{1/4} \dot{W}.
    \label{eqn:General_Stochastic_b}
\end{align}
\end{subequations}
Here the subscript $\xi = (\ve,T,\gamma)$ denotes the
set of parameters. $T>0$ is an artificial temperature for the latent variable,
$\gamma$ is an artificial friction parameter, and $\dot{W}(t)$ is the
white noise. Note that the noise is degenerate and is applied only to
the $x$ component. The scaling factors of the friction term and the
noise with respect to $\ve$ are the proper scaling relations due to
the fluctuation-dissipation relation~\cite{LeimkuhlerMatthews2015,Pavliotis2014}.
Eq.~\eqref{eqn:General_Stochastic_b} is a Langevin equation, and thus
the system will be referred to as the Stochastic-XLMD scheme in the
following discussion. 

  Compared to the Nose-Hoover thermostat, the use
of a Langevin thermostat has better ergodicity properties and hence
facilitates our analysis. The Langevin thermostat does not require
propagation of auxiliary variables used in the Nose-Hoover thermostat,
and hence is also computationally less expensive.

Substituting $Q(r,x)$ from Eq.~\eqref{eqn:Hamiltonian_Quadratic} into
Eq.~\eqref{eqn:general_algebraic},~\eqref{eqn:General_Deterministic},
and~\eqref{eqn:General_Stochastic}, we arrive at the exact dynamics,
XL-BOMD, and Stochastic-XLMD for the polarizable force field,
respectively. In particular, \eqref{eqn:General_Deterministic}
together with the form~\eqref{eqn:Hamiltonian_Quadratic} provides an
alternative derivation of the recently developed inertial extended
Lagrangian without self-consistent field iteration (iEL/0-SCF)
method~\cite{AlbaughNiklassonHead-Gordon2017,AlbaughHead-Gordon2017}.
The form of the Stochastic-XLMD for polarizable force fields will be
given explicitly in Eq.~\eqref{eqn:SDE} in
section~\ref{sec:timeaverage}.




\smallskip 
\noindent\textbf{Contribution:}
The main contribution of this paper is to prove that for the
polarizable force field model, the atomic dynamics of Stochastic-XLMD
method converges to the exact dynamics as $\ve,T\to 0$. More
specifically, under proper assumptions, we prove the following
bounds for 2-norm errors: 
\begin{align*}
  &\mathbf{E}\Bigl(\sup_{0 \leq t \leq t_f} \lvert \ret(t) -
  r_{\star}(t)\rvert \vee \lvert \pet(t) - p_{\star}(t)\rvert \Bigr) \leq C \left(\ve^{1/2} + \ve^{1/4}T^{1/2} + T\right) {.}
\end{align*}
Here $p_{\star}(t)=\dot{r}_{\star}(t)$ is the momentum for the exact
dynamics, and similarly $\pet(t)=\dot{r}_{\xi}(t)$. $a \vee b$ stands
for the maximum of $a$ and $b$. Our proof is based on
the method of averaging (see \eg~\cite{PavliotisStuart2008}).
In particular, when the temperature $T\sim \ve^{1/2}$, the convergence
rates for both $\ret$ and $\pet$ are $\Or(\ve^{1/2})$. Since $\ve^{-1/2}$ is
proportional to the highest frequency of the latent dynamics
$\xet$, the convergence rate is optimal. 

One feature of the Stochastic-XLMD method is that in contrast to the
behavior of the XL-BOMD method, the convergence rate does not depend
on the accuracy of the initial condition of the latent variable
$x(0),\dot{x}(0)$ (from solving the algebraic equation at $t =
0$). This is because Stochastic-XLMD has a damping factor, and the
numerical error on the latent variable can only affect the dynamics
within a finite time interval.  We study the efficiency of
Stochastic-XLMD with respect to the choice of the damping factor
$\gamma$, which indicates that $\gamma$ should be generally 
$\Or(1)$ in order to minimize the numerical error. This confirms
the proper scaling relation with respect to $\ve$ in
Eq.~\eqref{eqn:General_Stochastic}, and that the dissipation term
$\gamma \dxet$ should not be too large in order to avoid a strong
perturbation of the time-reversible microcanonical
dynamics~\cite{Niklasson2009}.  Numerical results for model
polarizable force field systems verify our theoretical
estimates.  We also performed numerical results for systems with
non-quadratic interaction energy with respect to latent variable $x$,
and the numerical behavior is similar to that of the polarizable force
field models.





\smallskip 
\noindent\textbf{Organization:} 
The rest of the paper is organized as follows.  We study the limit
when $\ve,T\to0$ in terms of time averaging and state the main result,
Theorem~\ref{thm:Main_Theorem} in section~\ref{sec:timeaverage}.  The
proof of the main theorem 
is given in section~\ref{sec:proof}. The results are justified by
numerical results in section~\ref{sec:numer}, followed by conclusion
and discussion in section~\ref{sec:conclusion}.

\section{Method of time averaging}\label{sec:timeaverage}



In the discussion below, we denote the momentum variables by $p$ and $y$, such
that $p_{\star},\pet$ are the first order time derivatives of $r_{\star},\ret$, 
respectively, and $\yet = \sqrt{\ve}\dxet$ is 
the rescaled time derivative of $\xet$. 
For the polarizable force field model with a quadratic interaction energy~\eqref{eqn:Hamiltonian_Quadratic},
the exact dynamics~\eqref{eqn:General_Dynamics} can be
rewritten as
\begin{equation}\label{eqn:Exact_ODE}
    \begin{split}
    \dot{r}_{\star} &= p_{\star}   {,}\\
    \dot{p}_{\star} &= F(r_{\star}) -\left[\frac{1}{2}x_{\star}^{\top}\frac{\partial A}{\partial r}(r_{\star})x_{\star}-\frac{\partial b}{\partial r}(r_{\star})^{\top}x_{\star}\right]     {,} \\
    0 &= b(r_{\star}) - A(r_{\star})x_{\star}, 
    \end{split}
\end{equation}
with initial values $r_{\star}(0)$ and $p_{\star}(0)$. 
Since the evolution of the latent variable $x_{\star}$ is determined by
the evolution of $r_{\star}$ via $x_{\star}=A(r_{\star})^{-1}b(r_{\star})$, 
we can then eliminate the $x$ variable and equivalently write the
dynamical system as
\begin{equation}\label{eqn:Exact_ODE_x_less}
    \begin{split}
    \dot{r}_{\star} &= p_{\star}   {,}\\
    \dot{p}_{\star} &= F(r_{\star}) -\left[\frac{1}{2}b(r_{\star})^{\top}A(r_{\star})^{-1}\frac{\partial A}{\partial r}(r_{\star})A(r_{\star})^{-1}b(r_{\star})-\frac{\partial b}{\partial r}(r_{\star})^{\top}A(r_{\star})^{-1}b(r_{\star})\right]     {.}
    \end{split}
\end{equation}

Following Eq.~\eqref{eqn:General_Stochastic}, the corresponding
Stochastic-XLMD method reads 
\begin{equation}\label{eqn:SDE}
    \begin{split}
    \dret &= \pet, \\
    \dpet &= F(\ret) - \left[\frac{1}{2}\xet^{\top}\frac{\partial A}{\partial r}(\ret)\xet - \frac{\partial b}{\partial r}(\ret)^{\top}\xet\right], \\
    \dxet &= \ve^{-1/2} \yet ,\\
    \dyet &= \ve^{-1/2}\left[b(\ret)-A(\ret)\xet\right] - \ve^{-1/2}\gamma \yet + \ve^{-1/4}\sqrt{2\gamma T} \dot{W} {,}
\end{split}
\end{equation}
where $\ve$,$\gamma$ and $T$ are positive parameters, 
and $W(t)$ is the standard Brownian motion. 
The last equation in~\eqref{eqn:SDE} is a stochastic differential
equation (SDE)
whose rigorous interpretation follows the It\^{o} integral formulation,
which can be simplified in this case as
\begin{align*}
    \yet(t) - \yet(0) &=  \ve^{-1/2}\int_0^t\left[b(\ret(s))-A(\ret(s))\xet(s)\right]ds \\
    &\quad - \ve^{-1/2}\int_0^t\gamma \yet(s)ds + \ve^{-1/4}\sqrt{2\gamma T} W(t) {.}
\end{align*}
Since we are mainly interested in the atomic dynamics, 
the initial values are assumed to be accurate, i.e. $\ret(0) = r_{\star}(0)$, 
$\pet(0) = p_{\star}(0)$.  Note that we only assume $\xet(0),\yet(0)$ are chosen
deterministically. In particular, we do not necessarily have
$\xet(0)=x_{\star}(0)$. 


If $\gamma = T = 0$, the SDE~\eqref{eqn:SDE} degenerates 
to a singularly perturbed ODE, which is exactly the XL-BOMD
approach~\eqref{eqn:General_Deterministic}.  In this case, numerical
results show that the convergence of $\re$ to $r_{\star}$ depends
sensitively on the initial value of $x(0)$.
Figure~\ref{fig:comp_DAE_SPE_SDE} shows that with the inaccurate initial
guess for $x(0)$, the XL-BOMD approach gives inaccurate dynamics, 
while the Stochastic-XLMD approach gives a much more accurate
approximation (see section~\ref{sec:accuracy} for the detailed setup). Here
we plot the trajectories of the first entries of $r$ and $x$, and the total energy. 

\begin{figure}
    \centering
    \includegraphics[width=0.48\linewidth]{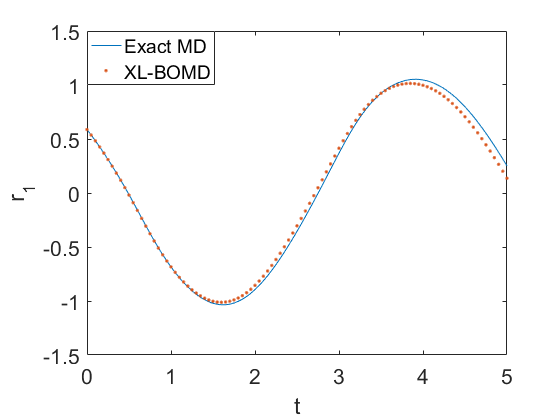}
    \includegraphics[width=0.48\linewidth]{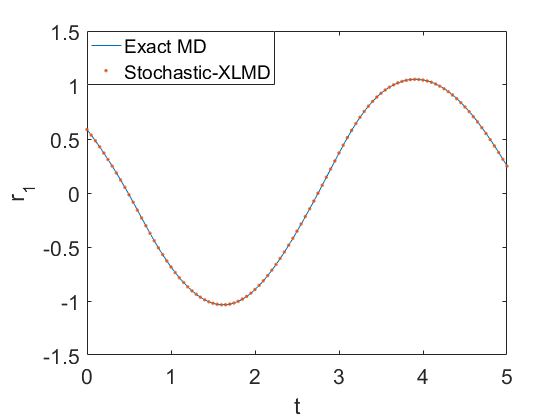}\\
    \includegraphics[width=0.48\linewidth]{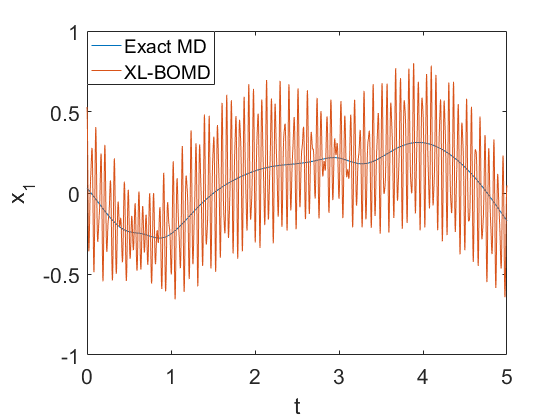}
    \includegraphics[width=0.48\linewidth]{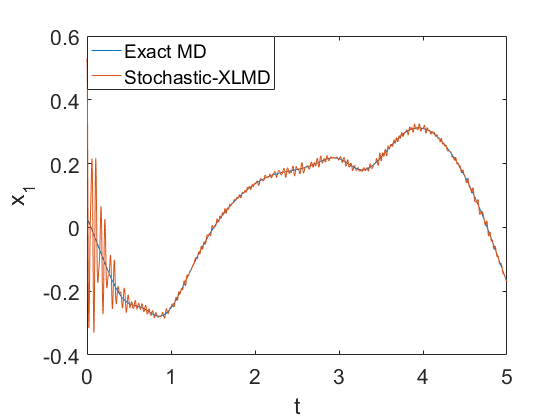}\\
    \includegraphics[width=0.48\linewidth]{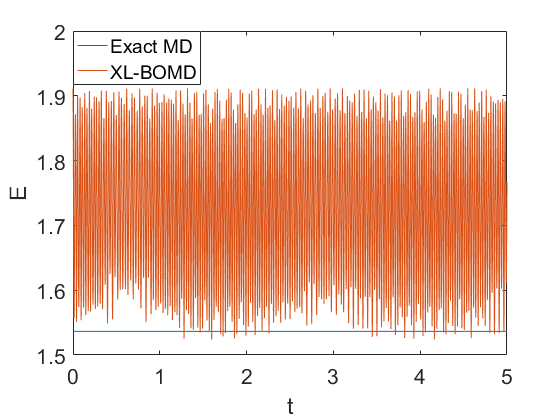}
    \includegraphics[width=0.48\linewidth]{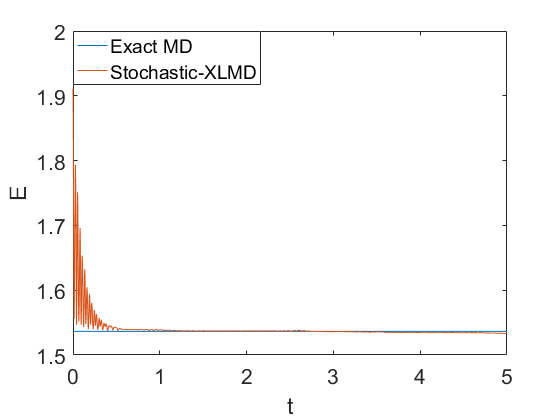}
    \caption{Left: Comparison of the exact MD and 
    XL-BOMD with $\ve = 10^{-4}$. Right: Comparison of the exact MD and
    Stochastic-XLMD with $\ve = 10^{-4}$, $\gamma = 0.100$ 
    and $T = 10^{-4}$. The three rows are the first entry of $r$, the 
    first entry of $x$ and total energy $\frac{1}{2}|p|^2+U+Q$, respectively. 
    }
    \label{fig:comp_DAE_SPE_SDE}
\end{figure}

The difference of the convergence behaviors can be explained  by
the method of time averaging in multiscale analysis.  Note that the
fast dynamics in the XL-BOMD approach is not ergodic.  In fact, the
fast dynamics in that case is a Hamiltonian ODE. Thus any smooth
function of the Hamiltonian will lie in the null space of the
corresponding generator.  Therefore, the error of the initial values
will be carried through the entire simulation.  We refer readers
to~\cite{BornemannSchutte1997} for an explicit example on how the
initial values influence the entire Hamiltonian dynamics (with strong
constraining potential).

However, in Stochastic-XLMD, the fast Langevin dynamics is
ergodic~\cite{LeimkuhlerMatthews2015,Pavliotis2014}, which means that
the stationary movement of $(x,y)$ is independent of the initial
values.  Consider the following Langevin dynamics with $\ve = 1$ and
fixed $r$ viewed as a parameter (and we omit the explicit dependence
on $r$ in notations for clarity),
\begin{equation}\label{eqn:Langevin}
     \begin{split}
          \dot{x} &= y, \\
          \dot{y} &= b-Ax-\gamma y + \sqrt{2\gamma T}\dot{W} {.}
     \end{split}
\end{equation}
The system~\eqref{eqn:Langevin} is ergodic with 
an invariant probability density
\[
     \rho_{\infty}(x,y;r) \propto \exp\left(-\frac{\vert y \vert^2}{2T}\right) 
        \exp\left( -\frac{(x-A^{-1}b)^{\top}A(x-A^{-1}b)}{2T} \right) {.}
\]
That is, as $t \rightarrow \infty$, the solution $(x,y)$ of 
Eq.~\eqref{eqn:Langevin} will converge in distribution to the invariant distribution
regardless of the initial values. 

Now we go back to Stochastic-XLMD~\eqref{eqn:SDE} and apply the method of
averaging. 
Note that the time scale of the oscillation of $\xet$ and $\yet$ 
is $\Or(\ve^{1/2})$. 
If we consider an intermediate time period $[t_1,t_2]$, 
for example, $t_2-t_1 = \Or(\ve^{1/4})$, 
then within this time period the variable $\ret$ almost remains constant, 
and the fast variable $\xet$ has already converged to 
the invariant distribution. 
Therefore when $\ve$ is small, it is reasonable to reckon that 
the slow dynamics of $\ret, \pet$ can be 
approximated by the averaged dynamics, in which the fast variable $\xet$ 
is averaged out with respect to the invariant measure. 
This can also be formally derived by the multiscale expansion
method (see for example~\cite[Chapter 10]{PavliotisStuart2008}). 

More specifically, let the averaged dynamics be defined as
\begin{equation}\label{eqn:Averaged_definition}
        \begin{split}
        \drave &= \pave {,}\\
    \dpave &= F(\rave) - \int_{\R^{2d}} \left[\frac{1}{2}x^{\top}\frac{\partial A}{\partial r}(\rave)x - \frac{\partial b}{\partial r}(\rave)^{\top}x\right] 
    \rho_{\infty}(x,y;\rave)dxdy {.}
        \end{split}
    \end{equation}
    After explicit evaluation of the integral (see the end of
    section~\ref{sec:poisson} for details), we arrive at 
\begin{equation}\label{eqn:Averaged_ODE}
    \begin{split}
        \drave &= \pave,  \\
        \dpave &= F(\rave) - \left[ \frac{1}{2}b(\rave)^{\top}A(\rave)^{-1}\frac{\partial A}{\partial r}(\rave)A(\rave)^{-1}b(\rave) - \frac{\partial b}{\partial r}(\rave)^{\top}A(\rave)^{-1}b(\rave) \right] - Tg(\rave){,}
    \end{split}
\end{equation}
where $g(r) = (g_1(r),\cdots, g_d(r))^{\top}$, 
\begin{equation}\label{eqn:Averaged_ODE_g}
        g_i(r) = \frac{1}{2}\sum_{k,l} \left(\frac{\partial A}{\partial r_i}\right)_{kl}\left(A^{-1}\right)_{kl} 
    = \frac{1}{2}\Tr\left(\frac{\partial A}{\partial r_i}(r)A^{-1}(r) \right) {.}
\end{equation}
Compare with the exact MD~\eqref{eqn:Exact_ODE_x_less}, there is only
one extra term $-Tg(\rave)$.  Therefore, we can expect that, as
$T\to 0$, the solution $(\rave, \pave)$ of~\eqref{eqn:Averaged_ODE}
converges to the exact solution $(r_{\star}, p_{\star})$, and 
$(\ret,\pet)$ converges to the exact solution $(r_{\star}, p_{\star})$
as $\ve,T\to 0$.  Since the time averaging relies on the
ergodicity of the fast dynamics, it is clear that the convergence is
independent of the initial value of the latent variables.

In order to study the efficiency of Stochastic-XLMD with respect to $\gamma$, 
first let us consider two limiting scenarios.
If $\gamma$ is very close to 0, the fast dynamics will be very close 
to the XL-BOMD dynamics, which leads to inaccurate solutions
if the initial condition of the latent variable is inaccurate. 
If $\gamma$ is very large, the noise must also increase according to
the fluctuation-dissipation relation. The fast dynamics 
then behaves as the Brownian dynamics, and thus it would take longer to reach
the invariant distribution for a fixed $r$.
We find that the optimal choice of $\gamma$ should be $\Or(1)$, and this will
be confirmed by numerical results.

Now we state the main result precisely. We consider a fixed time
interval $[0,t_f]$ with $t_f$ fixed and independent of $\xi$. 
Throughout the paper we denote by $|a|$ the absolute value of $a$ if $a$ is a scalar, and the vector 2-norm of $a$ if $a$ is a vector. 
$\|A\|_2, \|A\|_F, \|A\|_{*}$ denote the matrix 2-norm, the matrix Frobenius norm 
and the matrix trace norm, respectively. 
We make the following assumptions: 
\begin{enumerate}
    \item $A:\R^{d}\ra\mathcal{S}_{++}^{d}$ is a smooth map 
    with globally bounded $\|A\|_2, \left\|\frac{\partial A}{\partial r_j}\right\|_*, \left\|\frac{\partial^2 A}{\partial r_j^2}\right\|_*$, 
    $j = 1,\cdots,d$. 
    Furthermore, there exists a constant 
    $\kappa>0$ such that $A(r)\succeq \kappa$ for all $r\in\R^{d}$. 
    \item $b:\R^{d}\ra\R^{d}$ is a smooth map 
    with globally bounded $|b|, \left|\frac{\partial b}{\partial r_j}\right|, 
    \left|\frac{\partial^2 b}{\partial r_j^2}\right|$, $j = 1,\cdots,d$.
    \item $F:\R^{d}\ra\R^{d}$ is a smooth map 
    with globally bounded $\left|\frac{\partial F}{\partial r_j}\right|$, $\left|\frac{\partial^2 F}{\partial r_j^2}\right|$, $j = 1,\cdots,d$. 
    \item Initial values for $(r_{\star},p_{\star})$, 
    $(\ret,\pet,\xet,\yet)$ and $(\rave,\pave)$ are deterministic, with 
    $r_{\star}(0) = \ret(0) = \rave(0)$, 
    $p_{\star}(0) = \pet(0) = \pave(0)$. 
    \item For $0 < T < 1$, $0 < \ve < 1$, $\gamma > 0$, 
    the solution $(\rave,\pave)$ 
    is bounded independently of $T$, and 
    the solution $(\ret,\pet,\xet,\yet)$ is bounded in the sense that
    \[
        \mathbf{E}\left(\sup_{0\leq t \leq t_f}\vert\xet(t)\vert^2\right), \quad \mathbf{E}\left(\sup_{0\leq t \leq t_f}\vert\yet(t)\vert^2\right) {,}   \]
    \[
        \mathbf{E}\int_0^{t_f} |\ret(s)|^4 ds, \quad \mathbf{E}\int_0^{t_f} |\pet(s)|^4 ds, \quad 
        \mathbf{E}\int_0^{t_f} |\xet(s)|^4 ds, \quad \mathbf{E}\int_0^{t_f} |\yet(s)|^4 ds
    \]
    are bounded independently of $\ve$,$T$ and $\gamma$.      
\end{enumerate}
Here the first three assumptions assure the existence and uniqueness 
of the smooth, globally bounded solutions of~\eqref{eqn:Exact_ODE} 
and~\eqref{eqn:Averaged_ODE}, together with 
the existence and uniqueness of the solution of~\eqref{eqn:SDE}. 
It is worth mentioning that weakening all the assumptions is possible by 
proving some \emph{a priori} bounds, but 
we limit ourselves to the simple setup for expository purposes. 
Throughout this paper, $C$ will denote 
a sufficiently large constant of possibly varying size, which 
is independent of $\xi$ but may depend on other constant factors 
such as the final time $t_f$ and the dimension $d$. 

\smallskip 

\begin{thm}\label{thm:Main_Theorem}
    Let $(r_{\star},p_{\star})$ solve 
    the exact MD~\eqref{eqn:Exact_ODE_x_less}
    and $(\ret,\pet,\xet,\yet)$ 
    solve the Stochastic-XLMD~\eqref{eqn:SDE}. 
    Then for any $0 <\ve < 1$, $0 < T < 1$, $\gamma > 0$
    there exists a constant $C > 0$ such that 
    \begin{align*}
        &\mathbf{E}\left(\sup_{0 \leq t \leq t_f}\vert \ret(t) -
        r_{\star}(t)\vert \vee 
        \vert \pet(t) - p_{\star}(t)\vert \right) \\
        \leq &  C\left[\left(\frac{1}{\delta_\gamma}+\frac{1}{\delta_\gamma^2}\right)\ve^{1/2} + \left(\frac{\gamma}{\delta_\gamma^2}+1\right)\ve^{1/4}T^{1/2} + T + \frac{\gamma}{\delta_\gamma^3}\ve^{1/2}T\right]{.}
    \end{align*}
    where $\delta_\gamma$ is a $\gamma$-dependent positive real number defined as 
    \begin{equation}\label{eqn:gap}
        \delta_{\gamma} = \begin{cases}
            \gamma/4, \quad 0 < \gamma \leq 2\sqrt{\kappa} {,} \\
            (\gamma - \sqrt{\gamma^2 - 4\kappa})/4, 
            \quad \gamma > 2\sqrt{\kappa} {.}
        \end{cases}
    \end{equation}
\end{thm}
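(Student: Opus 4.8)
The plan is to compare $(\ret,\pet)$ with the exact dynamics through the intermediate averaged dynamics $(\rave,\pave)$ of~\eqref{eqn:Averaged_ODE}, splitting the error as $|\ret-r_\star|\vee|\pet-p_\star|\le(|\ret-\rave|\vee|\pet-\pave|)+(|\rave-r_\star|\vee|\pave-p_\star|)$ and estimating the \emph{averaging error} and the \emph{fluctuation error} separately. For the averaging error I would observe that \eqref{eqn:Averaged_ODE} differs from the exact system \eqref{eqn:Exact_ODE_x_less} only by the term $-Tg(\rave)$; since Assumption~1 forces $\|A^{-1}\|_2\le 1/\kappa$ together with boundedness of $\|\partial A/\partial r_i\|_*$, the function $g$ of \eqref{eqn:Averaged_ODE_g} is uniformly bounded, so writing the difference of the two ODEs, using that $F$ and the $T=0$ averaged force are Lipschitz, and applying Gronwall yields $\sup_{0\le t\le t_f}(|\rave-r_\star|\vee|\pave-p_\star|)\le CT$. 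This accounts for the isolated $T$ term in the stated bound.

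The fluctuation error is the heart of the matter, and I would attack it by the method of averaging via a corrector (Poisson) equation. Writing $\phi(r,x)=\tfrac12 x^\top\frac{\partial A}{\partial r}(r)x-\frac{\partial b}{\partial r}(r)^\top x$ for the coupling force and $\bar\phi(r)=\int\phi(r,x)\,\rho_\infty(x,y;r)\,dx\,dy$ for its average against the invariant measure, I first decompose $\phi(\ret,\xet)-\bar\phi(\rave)=\bigl[\phi(\ret,\xet)-\bar\phi(\ret)\bigr]+\bigl[\bar\phi(\ret)-\bar\phi(\rave)\bigr]$. The second bracket is Lipschitz in $r$ and contributes a term proportional to $|\ret-\rave|$ that Gronwall absorbs; the first bracket $\chi(r,x,y):=\phi(r,x)-\bar\phi(r)$ is centered with respect to $\rho_\infty(\cdot\,;r)$ by construction. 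I would then solve, with $r$ frozen as a parameter, the Poisson equation $\mathcal L_r\Psi(r,x,y)=\chi(r,x,y)$, where $\mathcal L_r$ is the generator of the $\ve=1$ fast Langevin dynamics~\eqref{eqn:Langevin}. Since this is a linear (Ornstein--Uhlenbeck) process and $\chi$ is quadratic in $x$, the corrector $\Psi$ is an explicit quadratic polynomial in $(x,y)$ with $r$-dependent coefficients, representable as $\Psi=-\int_0^\infty (e^{t\mathcal L_r}\chi)\,dt$ along the fast semigroup.

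The mechanism that converts $\chi$ into powers of $\ve$ is It\^{o}'s formula applied to $\Psi(\ret,\xet,\yet)$: because the fast part of the generator of~\eqref{eqn:SDE} is $\ve^{-1/2}\mathcal L_r$, one obtains $\int_0^t\chi(\ret,\xet,\yet)\,ds=\ve^{1/2}\bigl[\Psi(t)-\Psi(0)-\int_0^t\partial_r\Psi\cdot\pet\,ds\bigr]-\ve^{1/4}\sqrt{2\gamma T}\int_0^t\partial_y\Psi\cdot dW$. The boundary and drift-correction terms carry an overall factor $\ve^{1/2}$; bounding $|\Psi|$ and $|\partial_r\Psi|$ by the corrector estimates and invoking the fourth-moment bounds of Assumption~5 (via Cauchy--Schwarz for the $\int\partial_r\Psi\cdot\pet$ integral) produces the $\ve^{1/2}/\delta_\gamma$ and $\ve^{1/2}/\delta_\gamma^2$ contributions, while the $T$-dependent components of the corrector and its $r$-derivatives feed the higher-order $\gamma\ve^{1/2}T/\delta_\gamma^3$ term. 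The martingale is handled by the Burkholder--Davis--Gundy inequality, $\mathbf{E}\sup_{t\le t_f}|\int_0^t\partial_y\Psi\cdot dW|\le C(\mathbf{E}\int_0^{t_f}|\partial_y\Psi|^2\,ds)^{1/2}$, which together with the prefactor $\ve^{1/4}\sqrt{2\gamma T}$ yields the $\ve^{1/4}T^{1/2}$ contributions, the $\gamma$ and $\delta_\gamma$ weights coming from the noise amplitude and the $y$-gradient bounds of $\Psi$. Collecting everything, $\mathbf{E}\sup_{s\le t}(|\ret-\rave|\vee|\pet-\pave|)$ is dominated by the stated forcing plus $C\int_0^t\mathbf{E}\sup_{u\le s}(|\ret-\rave|\vee|\pet-\pave|)\,ds$, and Gronwall closes the estimate.

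The step I expect to be the main obstacle is the corrector estimate with explicit $\delta_\gamma$-dependence. The noise in~\eqref{eqn:SDE} is degenerate, acting only on $y$, so $\mathcal L_r$ is hypoelliptic rather than elliptic and the standard elliptic regularity for the Poisson equation is unavailable; instead one must exploit the explicit linear--Gaussian structure, identifying $\delta_\gamma$ of~\eqref{eqn:gap} as (half of) the spectral gap of the drift matrix $\bigl(\begin{smallmatrix}0&I\\-A&-\gamma I\end{smallmatrix}\bigr)$ --- namely $\gamma/2$ in the underdamped regime $\gamma\le 2\sqrt\kappa$ and $(\gamma-\sqrt{\gamma^2-4\kappa})/2$ in the overdamped regime --- and showing that $\|\Psi\|$, $\|\partial_r\Psi\|$, $\|\partial_r^2\Psi\|$, $\|\partial_y\Psi\|$, as functions of $r$ with controlled polynomial growth in $(x,y)$, scale like $\delta_\gamma^{-1}$, $\delta_\gamma^{-2}$, $\delta_\gamma^{-3}$, $\delta_\gamma^{-1}$ respectively. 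Each $r$-differentiation of the identity $\mathcal L_r\Psi=\chi$ produces an extra term $(\partial_r\mathcal L_r)\Psi$ that must be re-inverted, costing a further factor $\delta_\gamma^{-1}$, which is the origin of the escalating inverse-gap powers; tracking the simultaneous $T$-dependence of $\chi$, of $\rho_\infty$, and of the $\gamma T\Delta_y$ diffusion carefully enough to separate the pure-$\ve$, the $\ve T$, and the $\ve^{1/4}T^{1/2}$ contributions is the delicate bookkeeping that fixes the precise form of the bound.
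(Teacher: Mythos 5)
Your proposal follows essentially the same route as the paper: split the error through the averaged dynamics \eqref{eqn:Averaged_ODE}, solve the Poisson equation for the degenerate fast Langevin generator via the semigroup representation $-\int_0^\infty e^{t\mathcal L}(\,\cdot\,)\,dt$, apply It\^o's formula to the corrector to extract the $\ve^{1/2}$ and $\ve^{1/4}T^{1/2}$ factors, control the martingale by Burkholder--Davis--Gundy, and close with Gronwall, with $\delta_\gamma$ identified as (a fraction of) the spectral gap of the drift matrix exactly as in the paper. The only cosmetic differences are that the paper handles the $\Or(T)$ averaging error via the Alekseev--Gr\"obner variation-of-constants formula rather than a direct Gronwall argument, and obtains the $\nabla_r\phi$ bounds by differentiating the explicit Gaussian integral representation (using the derivative formula for matrix exponentials) rather than by implicitly differentiating the Poisson equation; both yield the same $\delta_\gamma$-scalings.
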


Before proceeding with the proof in section~\ref{sec:proof}, several
remarks are in order.

Theorem~\ref{thm:Main_Theorem} verifies the intuition that $\ve$ and
$T$ should be small to yield a reasonable approximation, and provides the
convergence order with respect to $\ve$ and $T$. More specifically, if
we fix $\gamma$ and all other parameters such as $t_f$, then the
dominating part of errors becomes $\Or(\ve^{1/2}+\ve^{1/4}T^{1/2}+T)$,
which suggests the optimal strategy for choosing parameters should be
$T = \Or(\ve^{1/2})$. Therefore the optimal convergence order with
respect to $\ve$ is $1/2$.  The optimality of the convergence order
will be verified by numerical results in section~\ref{sec:numer}.
Theorem~\ref{thm:Main_Theorem} also suggests that $\gamma$ should not
be too large or too small.  For fixed $\ve$ and $T$, the error bounds
will go to infinity if $\gamma \rightarrow 0$ or
$\gamma \rightarrow \infty$.  We also remark that the constant $C$
depends exponentially on $t_f$ due to the use of Gronwall's
inequality.

The convergence of XL-BOMD type schemes in the linear response regime
has been studied in~\cite{LinLuShao2014}, where the energy depends
quadratically both with respect to $r$ and $x$. In such a case, the dynamics is
diagonalizable, and the convergence of XL-BOMD can be studied using
perturbation theory with respect to the eigenvalues of the diagonalized
systems. Such a strategy cannot be used for
the polarizable force field model, in which the energy is non-quadratic with 
respect to $r$, though it is quadratic with respect to $x$. 

A rigorous proof of the method of averaging for model SDEs is
given in~\cite[Chapter 17]{PavliotisStuart2008}, where the generator
of the auxiliary SDE is assumed to be a non-degenerate elliptic
operator and the domain of interest is assumed to be compact. From the
technical perspective, the key of the proof is to apply the It\^o
formula to the solution of the Poisson equation~\eqref{eqn:Poisson}. 
The aforementioned assumptions facilitates the growth estimate of the
solution of the Poisson equation corresponding to the SDE. Our proof
generalizes the method to the Stochastic-XLMD case, where the
generator of the Langevin equation is a degenerate elliptic operator,
and the domain is the whole space $\RR^d$, which requires a more careful
study of the Poisson equation~\eqref{eqn:Poisson}.

The existence and uniqueness of a smooth solution to the Poisson
equation can be assured in a more general case than the quadratic
interaction energy~\cite{LeimkuhlerMatthewsStoltz2015,HairerPavliotis2008}. 
Under proper assumptions such that the interaction energy satisfies the
Poincar\'e inequality and grows moderately (both of which the
quadratic interaction energy satisfies), the generator is invertible
within the space $\{u\in H^1(d\mu): \int u d\mu = 0\}$ where $d\mu$ is
the invariant measure.  This is a result from
hypocoercivity~\cite{Villani2009}, which focuses on the convergence to the
stationary state for certain classes of degenerate diffusive
equations. 
The smoothness of the solution is a straightforward result from the
hypoellipticity~\cite{Pavliotis2014}, which can be traced back to
H\"ormander~\cite{Hormander1961}.

We would also like to mention a series of
papers~\cite{PardouxVerrtennikov2001,PardouxVerrtennikov2003,PardouxVerrtennikov2005},
which provide a more general study of the solution of the Poisson
equation on $\R^d$  both for the non-degenerate case and the degenerate case.
The solution of the Poisson equation in our proof
(Eq.~\eqref{eqn:Poisson_solution}) originates
from~\cite{PardouxVerrtennikov2005}. Our work generalizes the results
of~\cite{PardouxVerrtennikov2005,Talay2002} (though for a much simpler
scenario), 
in the sense that
we can describe the explicit dependence of the constant on parameters
such as $\gamma,\ve,T$, which is needed for the convergence rate of
the Stochastic-XLMD scheme.

\section{Proof of the main theorem}\label{sec:proof}

In this section we prove Theorem~\ref{thm:Main_Theorem} through combining the 
 following two theorems. 

\begin{thm}\label{thm:SDE_Error}
    Let $(\rave,\pave)$ solve 
    the averaged dynamics~\eqref{eqn:Averaged_ODE}
    and $(\ret,\pet,\xet,\yet)$ solve the Stochastic-XLMD~\eqref{eqn:SDE}. 
    Then for any $0 <\ve < 1$, $0 < T < 1$, $\gamma > 0$
    there exists a constant $C > 0$ such that 
    \begin{multline*}
      \mathbf{E}\left(\sup_{0 \leq t \leq t_f}\vert \ret(t) -
        \rave(t)\vert \vee \vert \pet(t) - \pave(t)\vert\right) \\
        \leq C\left[\left(\frac{1}{\delta_\gamma}+\frac{1}{\delta_\gamma^2}\right)\ve^{1/2} + \left(\frac{\gamma}{\delta_\gamma^2}+1\right)\ve^{1/4}T^{1/2} + \frac{\gamma}{\delta_\gamma^3}\ve^{1/2}T\right].
    \end{multline*}
\end{thm}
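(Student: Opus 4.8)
The plan is to realize the averaging heuristic of section~\ref{sec:timeaverage} rigorously through a corrector (cell) function, following the method of averaging but tracking the explicit dependence on $\ve$, $T$ and $\gamma$. Write the fluctuating force as $\Phi(r,x) = \frac12 x^{\top}\frac{\partial A}{\partial r}(r) x - \frac{\partial b}{\partial r}(r)^{\top} x$, so that the momentum equations of~\eqref{eqn:SDE} and~\eqref{eqn:Averaged_ODE} read $\dpet = F(\ret) - \Phi(\ret,\xet)$ and $\dpave = F(\rave) - \langle\Phi(\rave)\rangle$, where $\langle\Phi(r)\rangle := \int_{\R^{2d}}\Phi(r,x)\,\rho_{\infty}(x,y;r)\,dx\,dy$ is exactly the averaged force in~\eqref{eqn:Averaged_ODE}. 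For each frozen $r$ the fast process~\eqref{eqn:Langevin} has generator $\mathcal L_r = y\cdot\nabla_x + (b(r)-A(r)x-\gamma y)\cdot\nabla_y + \gamma T\,\Delta_y$, and I would introduce the corrector $\phi(x,y;r)$, independent of the momentum $p$, solving the Poisson equation $\mathcal L_r\,\phi = \Phi(r,x) - \langle\Phi(r)\rangle$. Because $A(r),b(r)$ are frozen, the fast dynamics is an Ornstein--Uhlenbeck system and the right-hand side is quadratic in $x$; hence $\phi$ can be found \emph{explicitly} as a quadratic polynomial in $(x,y)$ whose coefficient matrices solve a Lyapunov equation associated with the drift matrix $\bigl(\begin{smallmatrix}0 & I\\ -A(r) & -\gamma I\end{smallmatrix}\bigr)$. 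The associated fast semigroup decays at rate $2\delta_\gamma$, with $\delta_\gamma$ as in~\eqref{eqn:gap}, which is the source of the inverse powers of $\delta_\gamma$ in all subsequent bounds.

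The key step is to apply It\^o's formula to $\phi(\xet,\yet;\ret)$ along the trajectory of~\eqref{eqn:SDE}. Since the fast part of the infinitesimal generator of~\eqref{eqn:SDE} is exactly $\ve^{-1/2}\mathcal L_{\ret}$, while the only slow contribution is $\pet\cdot\partial_r\phi$ (using $\dret=\pet$ and the $p$-independence of $\phi$), the Poisson equation converts the leading $\Or(\ve^{-1/2})$ drift of $\phi$ into the fluctuation $\Phi(\ret,\xet)-\langle\Phi(\ret)\rangle$. Rearranging and integrating gives the averaging identity
\[
\int_0^t\bigl[\Phi(\ret,\xet)-\langle\Phi(\ret)\rangle\bigr]\,ds = \ve^{1/2}\bigl[\phi(t)-\phi(0)\bigr] - \ve^{1/2}\int_0^t \pet\cdot\partial_r\phi\,ds - \ve^{1/4}\sqrt{2\gamma T}\int_0^t \partial_y\phi\,dW,
\]
where $\phi(t)$ abbreviates $\phi(\xet(t),\yet(t);\ret(t))$. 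This identity is the heart of the matter: the fast oscillation on the left, which does not vanish pointwise, is shown to integrate to quantities small in $\ve$.

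It then remains to insert this into the error equations for $e_r := \ret-\rave$ and $e_p := \pet-\pave$, namely $\dot e_r = e_p$ and $\dot e_p = [F(\ret)-F(\rave)] - [\Phi(\ret,\xet)-\langle\Phi(\rave)\rangle]$. I would split $\Phi(\ret,\xet)-\langle\Phi(\rave)\rangle = \bigl(\Phi(\ret,\xet)-\langle\Phi(\ret)\rangle\bigr) + \bigl(\langle\Phi(\ret)\rangle-\langle\Phi(\rave)\rangle\bigr)$. The second bracket, together with $F(\ret)-F(\rave)$, is Lipschitz in $e_r$ by the smoothness and boundedness assumptions, hence contributes $C\int_0^t(|e_r|\vee|e_p|)\,ds$ and is absorbed by Gronwall's inequality. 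The first bracket is controlled by the averaging identity. Taking the supremum over $t\in[0,t_f]$ and then expectations, I would bound the boundary term by $\ve^{1/2}\,\mathbf E\sup|\phi|$, the drift term by $\ve^{1/2}\int_0^{t_f}|\pet|\,|\partial_r\phi|\,ds$, and the martingale term by the Burkholder--Davis--Gundy inequality, turning it into $\ve^{1/4}\sqrt{\gamma T}\,\mathbf E\bigl(\int_0^{t_f}|\partial_y\phi|^2\,ds\bigr)^{1/2}$. Combining with Gronwall's lemma and the fourth-moment bounds of Assumption~5 yields the stated estimate, with the exponential-in-$t_f$ constant $C$.

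The bookkeeping of the powers of $\delta_\gamma,\gamma,T$ is governed entirely by the corrector bounds. The boundary term scales like $\ve^{1/2}/\delta_\gamma$ because $\phi$ itself is $\Or(1/\delta_\gamma)$; the drift term scales like $\ve^{1/2}/\delta_\gamma^2$ because $\partial_r\phi$ differentiates the corrector through the frozen generator, and the Duhamel formula for $\partial_r e^{t\mathcal L_r}$ introduces a second time integral and hence a second factor of $1/\delta_\gamma$; the martingale term produces the $\ve^{1/4}T^{1/2}$ contributions, with the diffusion coefficient $\sqrt{\gamma T}$ supplying the $\gamma$ and $T^{1/2}$ factors and the $\Or(1)$ part of $\partial_y\phi$ (from the linear-in-$x$ part of the source) supplying the additive constant; and the residual $\ve^{1/2}T$ term appears as a higher-order contribution proportional to the temperature scale of the invariant measure times the averaging scale. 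The main obstacle is precisely these \emph{quantitative, parameter-explicit} bounds on $\phi$, $\partial_r\phi$ and $\partial_y\phi$: the generator $\mathcal L_r$ is only \emph{hypoelliptic} (the noise acts on $y$ alone) and lives on all of $\R^{2d}$, so the abstract averaging theorems assuming uniform ellipticity on a compact domain do not apply directly. The quadratic structure makes $\phi$ explicitly computable and sidesteps the general hypocoercivity machinery, but one must still verify that the explicit coefficients and their $r$-derivatives obey the claimed growth and $\delta_\gamma$-dependence uniformly in $r$; this, rather than the routine Gronwall closure, is where the real work lies.
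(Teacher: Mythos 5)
Your proposal follows essentially the same route as the paper's proof: a corrector $\phi$ solving the Poisson equation for the frozen (degenerate) Langevin generator, It\^o's formula to convert the fast fluctuation into boundary, drift, and martingale terms of orders $\ve^{1/2}$ and $\ve^{1/4}T^{1/2}$, Burkholder--Davis--Gundy for the martingale, and Gronwall to close the error estimate, with the parameter-explicit bounds on $\phi$, $\nabla_r\phi$, $\nabla_y\phi$ (Proposition~\ref{prop:Poisson}) carrying the $\delta_\gamma$ dependence exactly as you describe. The only cosmetic difference is that you phrase the explicit quadratic corrector via a Lyapunov equation, whereas the paper constructs it as the time integral $-\int_0^{\infty}\mathbf{E}_{x,y}[\,h(r,x(s))-\langle h\rangle\,]\,ds$ of the Ornstein--Uhlenbeck semigroup; these are equivalent.
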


\begin{thm}\label{thm:Averaged_Error}
    Let $(r_{\star},p_{\star})$ solve 
    the exact dynamics~\eqref{eqn:Exact_ODE_x_less}
    and $(\rave,\pave)$ solve 
    the averaged dynamics~\eqref{eqn:Averaged_ODE}. 
    Then for any $0 < T < 1$, 
    there exists a constant $C > 0$ such that 
    \[
    \sup_{0 \leq t \leq t_f}\vert \rave(t) - r_{\star}(t)\vert \vee
    \vert \pave(t) - p_{\star}(t)\vert \leq CT.
    \]
\end{thm}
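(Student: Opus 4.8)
The plan is to exploit the crucial structural observation that the exact dynamics~\eqref{eqn:Exact_ODE_x_less} and the averaged dynamics~\eqref{eqn:Averaged_ODE} are governed by \emph{identical} vector fields, differing only by the single additional term $-Tg(\rave)$ in the momentum equation of the averaged system. Since both systems start from the same initial data (Assumption 4, with $r_{\star}(0)=\rave(0)$ and $p_{\star}(0)=\pave(0)$), the separation between the two trajectories is driven entirely by this $\Or(T)$ forcing, and the result reduces to a standard Gronwall stability estimate for ordinary differential equations.

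Concretely, write $H(r) = F(r) - \left[\frac{1}{2} b(r)^{\top} A(r)^{-1}\frac{\partial A}{\partial r}(r)A(r)^{-1}b(r) - \frac{\partial b}{\partial r}(r)^{\top} A(r)^{-1}b(r)\right]$ for the common force field, so that $\dot{p}_{\star} = H(r_{\star})$ and $\dpave = H(\rave) - Tg(\rave)$. Setting $\delta r = r_{\star} - \rave$ and $\delta p = p_{\star} - \pave$, the differences satisfy $\dot{\delta r} = \delta p$ and $\dot{\delta p} = H(r_{\star}) - H(\rave) + Tg(\rave)$, with vanishing initial data $\delta r(0) = \delta p(0) = 0$.

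First I would establish the two ingredients needed for Gronwall. (i) The perturbation $g$ is uniformly bounded: from~\eqref{eqn:Averaged_ODE_g} and the estimate $|g_i(r)| \le \frac{1}{2}\|\frac{\partial A}{\partial r_i}(r)\|_* \, \|A(r)^{-1}\|_2 \le \frac{1}{2\kappa}\|\frac{\partial A}{\partial r_i}(r)\|_*$, Assumption 1 yields $\sup_r |g(r)| \le C_g$. (ii) The force field $H$ is Lipschitz on the bounded region in which both trajectories live. This holds because $\|A(r)^{-1}\|_2 \le 1/\kappa$ is uniformly controlled, the identity $\partial_{r_j}(A^{-1}) = -A^{-1}(\partial_{r_j}A)A^{-1}$ keeps the derivative of $A^{-1}$ bounded, and Assumptions 1--3 bound $F$, $b$, $A$ and their first derivatives; together these give $H \in C^1$ with bounded gradient, hence a Lipschitz constant $L_H$ on the compact set containing both trajectories. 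The existence of such a compact set is guaranteed since $(\rave,\pave)$ is bounded independently of $T$ (Assumption 5) and $(r_{\star},p_{\star})$ is bounded (Assumptions 1--3).

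Finally I would pass to integral form, $|\delta r(t)| \le \int_0^t |\delta p(s)|\ud s$ and $|\delta p(t)| \le \int_0^t \left(L_H|\delta r(s)| + C_g T\right)\ud s$, add the two to obtain $e(t) := |\delta r(t)| + |\delta p(t)| \le \max(1,L_H)\int_0^t e(s)\ud s + C_g t_f\, T$, and apply Gronwall's inequality to conclude $e(t) \le C_g t_f \, e^{\max(1,L_H)t_f}\, T = CT$ for all $t \in [0,t_f]$. Since the $\vee$-norm appearing in the statement is dominated by $e(t)$, this gives the claimed bound. There is no genuine obstacle here beyond careful bookkeeping; the only point requiring attention will be verifying the Lipschitz property of $H$, which amounts to differentiating the matrix product $A^{-1}(\partial_r A)A^{-1}$ and invoking the uniform lower bound $A(r)\succeq \kappa$ to keep $A^{-1}$ and all its derivatives controlled.
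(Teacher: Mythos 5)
Your proof is correct, but it takes a different route from the paper. The paper's proof of Theorem~\ref{thm:Averaged_Error} is a direct appeal to the theorem of Alekseev and Gr\"obner (nonlinear variation of constants): it introduces the resolvent $\Psi(t,s,\eta,\zeta)$ of the variational equation along the averaged flow, notes that $\Psi$ is bounded independently of $T$ because $\partial \bar{h}/\partial r$ is globally bounded, and then reads off the exact representation
\[
\begin{pmatrix} \rave(t)\\ \drave(t)\end{pmatrix}
= \begin{pmatrix} r_{\star}(t)\\ \dot{r}_{\star}(t)\end{pmatrix}
+ T\int_0^t \Psi\bigl(t,s,\rave(s),\drave(s)\bigr)\begin{pmatrix}0\\ g(\rave(s))\end{pmatrix}\ud s,
\]
from which the factor $T$ is explicit and the $CT$ bound is immediate. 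You instead run the elementary differential-inequality argument: subtract the two systems, use that they share the common force field $H=\bar h$ and differ only by the $-Tg(\rave)$ forcing, verify that $g$ is uniformly bounded (via $\|A^{-1}\|_2\le 1/\kappa$ and the trace-norm bound on $\partial A/\partial r_i$) and that $H$ is Lipschitz (via Assumptions 1--3 and $\partial_{r_j}(A^{-1})=-A^{-1}(\partial_{r_j}A)A^{-1}$), and close with Gronwall. Both arguments rest on exactly the same two ingredients --- boundedness of the perturbation $g$ and a uniform Lipschitz/derivative bound on the unperturbed vector field --- so the content is the same; the Alekseev--Gr\"obner route buys a cleaner exact formula in which the $O(T)$ scaling is manifest without tracking constants through an integral inequality, while your version is self-contained and avoids invoking the variational-equation machinery. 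One small remark: with the global bounds of Assumptions 1--3 your force field $H$ is in fact globally Lipschitz, so the appeal to a compact set containing both trajectories (and hence to Assumption 5) is not actually needed for this step.
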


Note that these two theorems describe two different contributions to the error, 
and the combination of them directly implies
Theorem~\ref{thm:Main_Theorem}.


Theorem~\ref{thm:Averaged_Error} is a direct result from 
the theorem of Alekseev and Gr\"obner~\cite[Theorem 14.5]{HairerNorsettWanner1987}. 
In order to prove Theorem~\ref{thm:SDE_Error}, we generalize the method
in~\cite[Chapter 17]{PavliotisStuart2008}, where the key is 
to apply the It\^o formula to the solution of the Poisson equation corresponding to 
Langevin dynamics. 
The rest of the proof is organized as follows. 
In section~\ref{sec:langevin} we first record some useful properties of the
Langevin dynamics~\eqref{eqn:Langevin}.  We then discuss the 
solution of the Poisson equation in section~\ref{sec:poisson}.
The proof of Theorem~\ref{thm:SDE_Error} and~\ref{thm:Averaged_Error}
follows in section~\ref{sec:proof_remain}.

\subsection{Properties of Langevin Dynamics}\label{sec:langevin}

We first study the linear SDE~\eqref{eqn:Langevin} with
fixed $r$, of which the solution can be obtained explicitly.
We remark that despite the $r$ dependence in $A$ and $b$, 
the bounds of $b$ and $A^{-1}$ are independent of $r$ by assumption 1 and 2. 

We start with the standard ergodic property of Langevin dynamics. The proof of Proposition~\ref{prop:Langevin} 
can be found in e.g. \cite[Prop. 6.1 and section 3.7]{Pavliotis2014}.
\begin{prop}\label{prop:Langevin}
    Let $(x(t),y(t))$ denote the solution of SDE~\eqref{eqn:Langevin}. 
    Let 
    \[
        \mathfrak{B} = \left(\begin{array}{cc}
            0 & -I_d \\
            A & \gamma I_d
        \end{array}\right), \quad 
        \mathfrak{z}(t) = \left(\begin{array}{c}
        x(t) - A^{-1}b \\
        y(t)
    \end{array}\right). 
    \]
    Then 
    
    (a) Let $\mathcal{L}_0$ be the generator of the Langevin dynamics~\eqref{eqn:Langevin}:
    \begin{equation}\label{eqn:Langevin_Generator}
        \mathcal{L}_0\psi =  y \cdot \nabla_{x}\psi + 
    (b - Ax - \gamma y)\cdot \nabla_{y} \psi
    + \gamma T \Delta_{y} \psi {.}
    \end{equation}
    and the adjoint of $\mathcal{L}_0$ is denoted $\mathcal{L}_0^{*}$.  
    Then the probability density function of $\mathfrak{z}(t)$ 
    is the solution of the Fokker-Planck equation
    \begin{equation}\label{eqn:Langevin_Fokker_Planck}
        \frac{d}{dt}\rho_t = \mathcal{L}_0^{*} \rho_t.
    \end{equation}
    Furthermore, the density function is explicitly given by
    \[
    \rho_t(\mathfrak{z}) = \frac{1}{Z_t} \exp \left[-\frac{1}{2} \left(\mathfrak{z}-e^{-\mathfrak{B}t}\mathfrak{z}(0)\right)^{\top} \mathfrak{S}_t^{-1} \left(\mathfrak{z}-e^{-\mathfrak{B}t}\mathfrak{z}(0)\right) \right] {,}
    \]
    where $\mathfrak{S}_t$ is given by
    \begin{equation}\label{eqn:Langevin_Sigmat}
        \mathfrak{S}_t = \int_0^t e^{-\mathfrak{B}s}
        \left(\begin{array}{cc}
            0 & 0 \\
            0 & 2\gamma T I_d
        \end{array}\right)
        e^{-\mathfrak{B}^{\top}s}ds
    \end{equation}
    and $Z_t$ is the normalization constant 
    \[
        Z_t = (2\pi)^d \sqrt{\det \mathfrak{S}_t} {.}
    \]
    
    (b) The SDE~\eqref{eqn:Langevin} is ergodic. 
    
    (c) There exists a unique invariant density $\rho_{\infty} (x,y;r) \in C^{\infty}(\R^d\times \R^d;\R^d)$ 
    such that 
    \[
        \mathcal{L}_0^{*} \rho_{\infty} = 0, \quad \rho_{\infty} > 0{.}
    \]
    Furthermore, the invariant measure is a Gaussian distribution in
    terms of $\mathfrak{z}$, which is mean-zero and its covariance
    matrix $\mathfrak{S}_{\infty}$ is defined
    by~\eqref{eqn:Langevin_Sigmat} after taking the limit $t\to \infty$.  Equivalently, in terms of $x$ and
    $y$, the invariant density $\rho^{\infty}$ is given by
    \[
        \rho_{\infty} = \frac{1}{Z_{\infty}}\exp\left(-\frac{\vert y \vert^2}{2T}\right) 
        \exp\left( -\frac{(x-A^{-1}b)^{\top}A(x-A^{-1}b)}{2T} \right) {,}
    \]
where $Z_{\infty}$ is the normalization constant 
\[
    Z_{\infty} = (2\pi)^{d}T^{d}\left(\sqrt{\det A}\right)^{-1}{.}
\]
\end{prop}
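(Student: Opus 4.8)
The plan is to exploit the linearity of~\eqref{eqn:Langevin}: after the affine shift $\mathfrak{z} = (x - A^{-1}b,\, y)^{\top}$ the system becomes an Ornstein--Uhlenbeck process whose law is Gaussian and available in closed form. First I would rewrite the drift using $b - Ax = -A(x - A^{-1}b)$, so that with $r$ (hence $A,b$) held fixed the shifted variable obeys the linear It\^o SDE $\ud\mathfrak{z} = -\mathfrak{B}\mathfrak{z}\,\ud t + \Sigma\,\ud W$, where $\Sigma$ is the $2d\times d$ matrix with lower block $\sqrt{2\gamma T}\,I_d$, so that $\Sigma\Sigma^{\top} = \mathrm{diag}(0,\,2\gamma T I_d)$, and $\mathfrak{B}$ is the matrix in the statement. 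This reduces everything to standard linear-SDE theory.

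For part (a) I would apply the variation-of-constants formula, valid for linear SDEs via It\^o calculus: differentiating $e^{\mathfrak{B}t}\mathfrak{z}$ gives $\mathfrak{z}(t) = e^{-\mathfrak{B}t}\mathfrak{z}(0) + \int_0^t e^{-\mathfrak{B}(t-s)}\Sigma\,\ud W(s)$. Because a stochastic integral of a deterministic integrand against Brownian motion is Gaussian, $\mathfrak{z}(t)$ is Gaussian with mean $e^{-\mathfrak{B}t}\mathfrak{z}(0)$ and covariance $\int_0^t e^{-\mathfrak{B}u}\Sigma\Sigma^{\top}e^{-\mathfrak{B}^{\top}u}\,\ud u$, which is precisely~\eqref{eqn:Langevin_Sigmat} after the substitution $u = t-s$. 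Writing out this Gaussian density produces the stated $\rho_t$, and the fact that the one-time density of an SDE solution solves the forward Kolmogorov equation $\partial_t\rho_t = \mathcal{L}_0^{*}\rho_t$ is standard, giving~\eqref{eqn:Langevin_Fokker_Planck}.

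For parts (b) and (c) the crux is the long-time behavior, which hinges on $\mathfrak{B}$ having all eigenvalues in the open right half-plane. I would verify this by simultaneously diagonalizing the symmetric positive definite matrix $A$: each eigenvalue $\mu \ge \kappa > 0$ contributes a $2\times2$ block with characteristic polynomial $\lambda^2 - \gamma\lambda + \mu$, whose roots $(\gamma \pm \sqrt{\gamma^2-4\mu})/2$ have strictly positive real part for every $\gamma>0$ (their sum is $\gamma$ and product is $\mu$). Hence $e^{-\mathfrak{B}t}\to 0$, the mean decays to $0$ independently of the initial condition, and $\mathfrak{S}_t$ converges to a finite limit $\mathfrak{S}_{\infty}$; this is exactly convergence in distribution to a mean-zero Gaussian, i.e.\ ergodicity. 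To pin down $\mathfrak{S}_{\infty}$ I would check that $\mathfrak{S}_{\infty} = T\,\mathrm{diag}(A^{-1},\,I_d)$ solves the stationary Lyapunov equation $\mathfrak{B}\mathfrak{S}_{\infty} + \mathfrak{S}_{\infty}\mathfrak{B}^{\top} = \Sigma\Sigma^{\top}$ by a short block computation, and that stability of $\mathfrak{B}$ forces this solution to be unique; translating back to $(x,y)$ yields the claimed $\rho_{\infty}$ with its normalization $Z_{\infty}$. Uniqueness of the invariant measure, positivity, and $C^{\infty}$ smoothness then follow from the explicit Gaussian form together with the hypoellipticity of $\mathcal{L}_0$: H\"ormander's condition holds because the noise fields $\partial_{y_i}$ and their brackets with the drift (which, via $\dot{x}=y$, generate the $\partial_{x_i}$ directions) span the full tangent space.

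The main obstacle is the spectral stability analysis of $\mathfrak{B}$ and the attendant confirmation of the explicit covariance; once Gaussianity is in hand the remainder is bookkeeping. I would keep the exposition brief, since~\cite[Prop.~6.1 and section 3.7]{Pavliotis2014} establishes this in greater generality, but I would highlight the block-diagonalization argument, as the resulting spectral abscissa---$\gamma/2$ in the underdamped regime and $(\gamma-\sqrt{\gamma^2-4\kappa})/2$ in the overdamped regime---is precisely what governs the exponential decay rate and motivates the gap parameter $\delta_{\gamma}$ of~\eqref{eqn:gap} used in the main theorem.
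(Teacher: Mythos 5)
Your proposal is correct and follows the standard Ornstein--Uhlenbeck treatment that the paper itself relies on: the paper gives no proof of this proposition, deferring entirely to \cite[Prop.~6.1 and section 3.7]{Pavliotis2014}, and your variation-of-constants/Gaussian-law argument, the Lyapunov-equation verification of $\mathfrak{S}_{\infty}=T\,\mathrm{diag}(A^{-1},I_d)$, and the H\"ormander condition are exactly the ingredients of that standard proof. Your block-diagonalization of $\mathfrak{B}$ and the resulting spectral abscissa also reproduce what the paper carries out in Appendix~A for Proposition~\ref{prop:Langevin_conv_rate}, so nothing is missing.
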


The convergence rate of the covariance matrix $\mathfrak{S}_t$ towards
$\mathfrak{S}_{\infty}$ is recorded in
Proposition~\ref{prop:Langevin_conv_rate}.  

\begin{prop}\label{prop:Langevin_conv_rate}
  Let $\delta_\gamma$ denote the positive real number defined in
  Eq.~\eqref{eqn:gap}.  Then there exists a constant $C>0$ such that
  \begin{align*} 
    \mathrm{(a)} \qquad & 
        \|e^{-\mathfrak{B}t}\|_{2} \leq C e^{-\delta_\gamma t}{,} \\
    \mathrm{(b)} \qquad & 
        \|\mathfrak{S}_t - \mathfrak{S}_{\infty} \|_{2}
                        \leq C\frac{\gamma}{\delta_\gamma}Te^{-2\delta_\gamma t}.
  \end{align*}
\end{prop}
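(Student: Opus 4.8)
The plan is to diagonalize $A$, reduce estimate (a) to a family of decoupled $2\times 2$ blocks, and then read off (b) almost immediately from (a). First I would use that $A$ is symmetric positive definite: write $A = U\Lambda U^{\top}$ with $U$ orthogonal and $\Lambda = \mathrm{diag}(\lambda_1,\dots,\lambda_d)$, where $\kappa \le \lambda_i \le \|A\|_2$ for every $i$ by Assumption 1. Conjugating $\mathfrak{B}$ by the orthogonal matrix $\mathrm{diag}(U,U)$ and interleaving coordinates puts $\mathfrak{B}$ into block-diagonal form with $2\times 2$ blocks $\mathfrak{B}_i = \left(\begin{smallmatrix} 0 & -1\\ \lambda_i & \gamma\end{smallmatrix}\right)$. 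Since orthogonal conjugation preserves the spectral norm, $\|e^{-\mathfrak{B}t}\|_2 = \max_i \|e^{-\mathfrak{B}_i t}\|_2$, so it suffices to bound each block uniformly in $\gamma$ and $i$.

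Next I would identify $\delta_\gamma$ spectrally. The eigenvalues of $\mathfrak{B}_i$ solve $\mu^2 - \gamma\mu + \lambda_i = 0$, i.e. $\mu_i^{\pm} = (\gamma \pm \sqrt{\gamma^2 - 4\lambda_i})/2$. A short case check shows that $2\delta_\gamma$, with $\delta_\gamma$ as in \eqref{eqn:gap}, lower-bounds $\mathrm{Re}\,\mu_i^{\pm}$ for all $i$ and equals the spectral abscissa of $\mathfrak{B}$ in the worst case: if $\gamma \le 2\sqrt{\kappa}$ every block is under- or critically damped with $\mathrm{Re}\,\mu_i^{\pm} = \gamma/2 = 2\delta_\gamma$, whereas if $\gamma > 2\sqrt{\kappa}$ the slowest mode sits at the smallest eigenvalue $\lambda_i = \kappa$ and gives $(\gamma - \sqrt{\gamma^2 - 4\kappa})/2 = 2\delta_\gamma$ (one checks $\mu_i^-$ is increasing in $\lambda_i$ in the overdamped regime). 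This is precisely the origin of the two cases in \eqref{eqn:gap}.

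I would then prove the block bound with a deliberate factor-of-$2$ slack. From the explicit exponential $e^{-\mathfrak{B}_i t} = e^{-\gamma t/2}\bigl[\cos(\omega_0 t)\,I - \omega_0^{-1}\sin(\omega_0 t)\,(\mathfrak{B}_i - \tfrac{\gamma}{2}I)\bigr]$ with $\omega_0 = \tfrac12\sqrt{4\lambda_i - \gamma^2}$ in the underdamped case (and its hyperbolic analogue otherwise), the leading decay is $e^{-\mathrm{Re}\,\mu_i^{-}t} \le e^{-2\delta_\gamma t}$, while the prefactor is at most affine in $t$, the linear term arising only in the near-critically damped regime where $\omega_0 \to 0$ and $\omega_0^{-1}\sin(\omega_0 t) \to t$. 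Requesting only the rate $e^{-\delta_\gamma t}$, i.e. half the true rate, leaves a spare factor $e^{-\delta_\gamma t}$ to absorb this growth: $(1 + ct)e^{-\delta_\gamma t}$ is bounded, and in every regime that produces a growing prefactor one has $\gamma$ bounded below by $2\sqrt{\kappa}$ and $\lambda_i \in [\kappa, \|A\|_2]$, so both $c$ and $\delta_\gamma^{-1}$ are controlled by $\kappa$ and $\|A\|_2$ alone. This yields (a) with $C$ independent of $\gamma$ (and of $\ve, T$).

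Finally, (b) follows from (a): since (a) makes $s \mapsto \|e^{-\mathfrak{B}s}\|_2^2$ integrable, $\mathfrak{S}_\infty$ in \eqref{eqn:Langevin_Sigmat} exists and $\mathfrak{S}_t - \mathfrak{S}_\infty = -\int_t^\infty e^{-\mathfrak{B}s} D\, e^{-\mathfrak{B}^{\top}s}\,ds$ with $D = \mathrm{diag}(0, 2\gamma T I_d)$, $\|D\|_2 = 2\gamma T$. Using $\|e^{-\mathfrak{B}^{\top}s}\|_2 = \|e^{-\mathfrak{B}s}\|_2$ and (a),
\[
  \|\mathfrak{S}_t - \mathfrak{S}_\infty\|_2 \le \|D\|_2 \int_t^\infty \|e^{-\mathfrak{B}s}\|_2^2\,ds \le 2\gamma T C^2 \int_t^\infty e^{-2\delta_\gamma s}\,ds = \frac{C^2\gamma}{\delta_\gamma}\,T\,e^{-2\delta_\gamma t},
\]
which is exactly (b). The main obstacle is the third step: securing the block estimate with a constant uniform in $\gamma$. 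A naive diagonalization bound of the form $\kappa(V)\,e^{-\mathrm{Re}\,\mu_i^{-}t}$ degrades because the eigenvector matrix $V$ becomes ill-conditioned as $\omega_0 \to 0$; the remedy is precisely the intentional loss of a factor of $2$ in the exponent, which converts the transient affine growth into a uniformly bounded contribution.
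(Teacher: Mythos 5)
Your proposal is correct and follows essentially the same route as the paper's proof in Appendix A: orthogonal reduction of $\mathfrak{B}$ to $2\times 2$ blocks indexed by the eigenvalues of $A$, an explicit formula for each block exponential whose transient (affine-in-$t$, near critical damping) prefactor is absorbed by deliberately giving up a factor of $2$ in the decay rate so that the constant is uniform in $\gamma$, and then part (b) by integrating the squared bound from (a) over $[t,\infty)$. The only difference is cosmetic: you compute the block exponential via the trace-free splitting $\mathfrak{B}_i=\tfrac{\gamma}{2}I+N$ with $N^2=-\omega_0^2 I$, whereas the paper uses the Cayley--Hamilton recursion; both yield the same explicit form and the same case analysis.
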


\begin{proof} 
See Appendix~\ref{append:proof_Prop5}. 

\end{proof}

\subsection{Poisson Equation}\label{sec:poisson}

Define
\begin{equation}\label{eqn:ODE_RHS}
    h(r,x) := F(r)-\left(\frac{1}{2}x^{\top}\frac{\partial A}{\partial r}x-\left(\frac{\partial b}{\partial r}\right)^{\top}x\right){.}
\end{equation}
We are interested in the following Poisson equation corresponding to the
Langevin dynamics.
\begin{equation}\label{eqn:Poisson}
        \begin{split}
        &\mathcal{L}_0 \phi(x,y;r) = h(r,x) - \int_{\R^{2d}}h(r,x')\rho_{\infty}(x',y';r)dy'dx' {,} \\
        &\int_{\R^{2d}}\phi(x,y;r)\rho_{\infty}(x,y;r)dydx = 0 {.}
        \end{split}
\end{equation}

\begin{prop}\label{prop:Poisson}
  For any $0 < T < 1$, $\gamma > 0$, there exists a smooth function
  $\phi(x,y;r)$ which solves the Poisson equation~\eqref{eqn:Poisson}
  and satisfies the estimates 
    \begin{equation}\label{eqn:Poisson_Estimate}
    \begin{split}
        |\phi(r,x,y)|  
        &\leq C\left[\frac{\gamma}{\delta_\gamma^2}T+ \frac{1}{\delta_\gamma}(1+|x|^2+|y|^2)\right] \\
        \|\nabla_{(x,y)}\phi(r,x,y)\|_F &\leq C\frac{1}{\delta_\gamma}(1 + |x| + |y|)\\
        \|\nabla_r\phi(r,x,y)\|_2 &\leq C\left[ \frac{\gamma}{\delta_\gamma^2}T + \frac{1}{\delta_\gamma}(1+|x|^2+|y|^2) + \frac{\gamma }{\delta_\gamma^3}T + \frac{1}{\delta_\gamma^2}(1+|x|^2+|y|^2)\right] {,}
    \end{split}
  \end{equation}
  where $C$ is a positive constant which is independent of $\gamma, T, r,x,y$. 
\end{prop}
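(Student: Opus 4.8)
The plan is to construct the solution of the Poisson equation~\eqref{eqn:Poisson} through the probabilistic representation associated with the Langevin semigroup, and then to read off the quantitative bounds~\eqref{eqn:Poisson_Estimate} from the explicit Gaussian structure recorded in Propositions~\ref{prop:Langevin} and~\ref{prop:Langevin_conv_rate}. Writing $g(x,y;r) := h(r,x) - \int_{\R^{2d}} h(r,x')\rho_{\infty}(x',y';r)\,\mathrm{d}x'\mathrm{d}y'$ for the centered right-hand side, and letting $\mathbf{E}^{x,y}$ denote expectation over the solution $(x(s),y(s))$ of~\eqref{eqn:Langevin} (with the fixed $r$ as a parameter) started from $(x,y)$, I would take
\[
\phi(x,y;r) = -\int_0^\infty \mathbf{E}^{x,y}\big[\,g(x(s),y(s);r)\,\big]\,\mathrm{d}s,
\]
which is the representation from~\cite{PardouxVerrtennikov2005}. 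Differentiating the backward Kolmogorov semigroup and using $\int g\,\rho_{\infty}=0$ shows this candidate solves $\mathcal{L}_0\phi = g$, while invariance of $\rho_{\infty}$ gives the normalization $\int\phi\,\rho_{\infty}=0$ automatically; convergence of the time integral is supplied by the exponential decay in Proposition~\ref{prop:Langevin_conv_rate}(a). Smoothness in all variables follows from hypoellipticity of $\mathcal{L}_0$ together with the smooth dependence of $\mathfrak{B}$ and $\mathfrak{S}_s$ on $r$.

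The crucial simplification is that for fixed $r$ the dynamics is a linear Ornstein--Uhlenbeck process, so by Proposition~\ref{prop:Langevin} the law of $(x(s),y(s))$ is Gaussian with mean shift $\mu(s):=e^{-\mathfrak{B}s}\mathfrak{z}(0)$ and covariance $\mathfrak{S}_s$. Since $h$ is quadratic in $x$, I can evaluate the expectation exactly: $\mathbf{E}^{x,y}[h_i(r,x(s))]-\int h_i\rho_\infty$ splits into (i) a term linear in $\mu(s)$, (ii) a term quadratic in $\mu(s)$, and (iii) the trace term $-\tfrac12\Tr\big(\tfrac{\partial A}{\partial r_i}(\Sigma_s-\Sigma_{\infty})\big)$, where $\Sigma_s,\Sigma_{\infty}$ are the $x$-blocks of $\mathfrak{S}_s,\mathfrak{S}_{\infty}$. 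Using $|\mu(s)|\le Ce^{-\delta_\gamma s}(1+|x|+|y|)$ from part~(a) and $\|\Sigma_s-\Sigma_{\infty}\|_2\le C\tfrac{\gamma}{\delta_\gamma}Te^{-2\delta_\gamma s}$ from part~(b), then integrating $\int_0^\infty e^{-\delta_\gamma s}\,\mathrm{d}s=\delta_\gamma^{-1}$ and $\int_0^\infty e^{-2\delta_\gamma s}\,\mathrm{d}s=(2\delta_\gamma)^{-1}$, produces exactly the claimed bound on $|\phi|$: the $\tfrac{\gamma}{\delta_\gamma^2}T$ contribution comes from~(iii), and the $\tfrac{1}{\delta_\gamma}(1+|x|^2+|y|^2)$ contribution from~(i)--(ii) after absorbing $|x|,|y|$ into $|x|^2,|y|^2$. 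The bound on $\nabla_{(x,y)}\phi$ then follows by differentiating under the integral sign: the $(x,y)$-dependence enters only linearly through $\mu(s)$, so $\nabla_{(x,y)}\mu(s)=e^{-\mathfrak{B}s}$, and the same decay estimates give $\|\nabla_{(x,y)}\phi\|_F\le C\delta_\gamma^{-1}(1+|x|+|y|)$.

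The main obstacle, and the source of the extra powers of $\delta_\gamma^{-1}$ in the last line of~\eqref{eqn:Poisson_Estimate}, is the estimate on $\nabla_r\phi$. Here $r$ enters both through the bounded coefficients $F$, $\tfrac{\partial A}{\partial r_i}$, $\tfrac{\partial b}{\partial r_i}$, $A^{-1}b$ (controlled by Assumptions~1--3) and, more delicately, through $\mathfrak{B}$ and hence through $e^{-\mathfrak{B}s}$ and $\mathfrak{S}_s-\mathfrak{S}_{\infty}$. I would handle the matrix-exponential derivative by Duhamel's formula,
\[
\frac{\partial}{\partial r_j}e^{-\mathfrak{B}s} = -\int_0^s e^{-\mathfrak{B}u}\,\frac{\partial\mathfrak{B}}{\partial r_j}\,e^{-\mathfrak{B}(s-u)}\,\mathrm{d}u,\qquad \frac{\partial\mathfrak{B}}{\partial r_j}=\begin{pmatrix}0 & 0\\ \partial A/\partial r_j & 0\end{pmatrix},
\]
which together with part~(a) yields $\|\partial_{r_j}e^{-\mathfrak{B}s}\|_2\le Cs\,e^{-\delta_\gamma s}$. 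The extra factor $s$ upgrades each $\int_0^\infty e^{-\delta_\gamma s}\,\mathrm{d}s$ to $\int_0^\infty s\,e^{-\delta_\gamma s}\,\mathrm{d}s=\delta_\gamma^{-2}$, accounting for the $\tfrac{1}{\delta_\gamma^2}(1+|x|^2+|y|^2)$ term. Differentiating $\mathfrak{S}_s-\mathfrak{S}_{\infty}=-\int_s^\infty e^{-\mathfrak{B}u}\,\mathrm{diag}(0,2\gamma T I_d)\,e^{-\mathfrak{B}^\top u}\,\mathrm{d}u$ in the same way produces a bound of order $\gamma T\int_s^\infty u\,e^{-2\delta_\gamma u}\,\mathrm{d}u$, whose subsequent $s$-integration gives the $\tfrac{\gamma}{\delta_\gamma^3}T$ term. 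The remaining contributions, in which $\partial_{r_j}$ falls on the bounded coefficients rather than on the dynamics, reproduce the $\tfrac{\gamma}{\delta_\gamma^2}T$ and $\tfrac{1}{\delta_\gamma}(1+|x|^2+|y|^2)$ terms already present in the bound for $|\phi|$. Throughout, differentiation under the integral sign is justified because all integrands and their derivatives are dominated by the integrable exponentials supplied by Proposition~\ref{prop:Langevin_conv_rate}.
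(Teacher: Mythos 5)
Your proposal is correct and follows essentially the same route as the paper: the probabilistic representation $\phi=-\int_0^\infty \mathbf{E}_{x,y}[\,\cdot\,]\,ds$, the explicit Gaussian evaluation splitting the integrand into trace, linear, and quadratic pieces, and the decay estimates of Proposition~\ref{prop:Langevin_conv_rate} integrated in $s$. Your Duhamel formula for $\partial_{r_j}e^{-\mathfrak{B}s}$ is the same identity the paper invokes from Wilcox (after the substitution $u=\beta s$), and it yields the identical $s\,e^{-\delta_\gamma s}$ bound that produces the extra powers of $\delta_\gamma^{-1}$ in the $\nabla_r\phi$ estimate.
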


\begin{proof}
  The proof is constructive. 
    Let 
    \[
        f(x,y;r) = h(r,x) - \int_{\R^{2d}}h(r,x')\rho_{\infty}(x',y';r)dy'dx' {.}
    \]
    Define
    \[
    v(x,y,t;r) = \mathbf{E}_{x,y}f(x(t),y(t);r){,}
    \]
    and
    \begin{equation}\label{eqn:Poisson_solution}
    \begin{split}
        \phi(x,y;r) &= -\int_0^{\infty} v(x,y,s;r)ds \\
        &= - \int_0^{\infty} \left[\mathbf{E}_{x,y} h(r,x(s)) - \int_{\R^{2d}}h(r,x')\rho_{\infty}(x',y';r)dy'dx'\right] ds {.}
    \end{split}
    \end{equation}
    Here $\mathbf{E}_{x,y}$ means the expectation with respect to $(x(t),y(t))$, which is the solution to 
    the SDE~\eqref{eqn:Langevin} with initial values 
    $x(0) = x, y(0) = y$. 

    We organize the proof in a few steps below.
    
    (1) $\phi$ is well-defined. The key observation is that $h$ is a
    quadratic function in $x$. Hence $\mathbf{E}_{x,y}h(r,x(s))$ can
    be computed explicitly as $x(s)$ is a Gaussian random variable by
    Proposition~\ref{prop:Langevin}.  Specifically, we still use the
    notations in Proposition~\ref{prop:Langevin} and let
    $\mathfrak{D}(s)$ denote the top $d$ rows of the matrix
    $e^{-\mathfrak{B}s}$, then
    \begin{align*}
         \mathbf{E}_{x,y} h_k(r,x(s)) 
         &= \mathbf{E}_{x,y} \left[F_k(r)-\left(\frac{1}{2}x(s)^{\top}\frac{\partial A}{\partial r_k}x(s)-\frac{\partial b}{\partial r_k}^{\top}x(s)\right)\right] \\
         &= F_k(r) - \mathbf{E}_{x,y}\left(\frac{1}{2}\left(x(s)-\mathbf{E}_{x,y} x(s)\right)^{\top}\frac{\partial A}{\partial r_k}\left(x(s)-\mathbf{E}_{x,y} x(s)\right)\right) \\
         & \quad - \frac{1}{2}\mathbf{E}_{x,y}x(s)^{\top}\frac{\partial A}{\partial r_k} \mathbf{E}_{x,y}x(s)
         +\frac{\partial b}{\partial r_k}^{\top}\mathbf{E}_{x,y}x(s) \\
         &= F_k(r) - \frac{1}{2}\Tr\left(\frac{\partial A}{\partial r_k}\mathfrak{S}_t^{11} \right)
         - \frac{1}{2}\left(A^{-1}b + \mathfrak{D}(s)\mathfrak{z}\right)^{\top}\frac{\partial A}{\partial r_k} \left(A^{-1}b 
         +\mathfrak{D}(s)\mathfrak{z}\right) \\
         &\quad + \frac{\partial b}{\partial r_k}^{\top} \left(A^{-1}b 
         +\mathfrak{D}(s)\mathfrak{z}\right),
    \end{align*}
    where $\mathfrak{S}_t^{11}$ is the upper-left $d\times d$ block matrix 
    of $\mathfrak{S}_t$. 
    
    The second part of
     the integrand in Eq.~\eqref{eqn:Poisson_solution} is the expectation with respect
      to $x', y'$ with density $\rho_{\infty}$, which can be computed
      as
    \begin{multline}\label{eqn:Averaged_RHS_Computation}
      \int_{\R^{2d}}h_k(r,x')\rho_{\infty}(x',y';r)dy'dx' \\
        = F_k(r) - \frac{1}{2}\Tr\left(\frac{\partial A}{\partial r_k}\mathfrak{S}_{\infty}^{11} \right)
         - \frac{1}{2}\left(A^{-1}b\right)^{\top}\frac{\partial A}{\partial r_k} A^{-1}b 
         + \frac{\partial b}{\partial r_k}^{\top} A^{-1}b {.}
    \end{multline}
    
    The integrand $v$ in~\eqref{eqn:Poisson_solution} can be hereby
    rewritten as
    \begin{equation}\label{eqn:Poisson_Integrand}
        \begin{split}
            &\quad v_k(x,y,s;r) \\
        &= -\frac{1}{2}\Tr\left[\frac{\partial A}{\partial r_k}(\mathfrak{S}_t^{11} - \mathfrak{S}_{\infty}^{11})\right] - \frac{1}{2} \mathfrak{z}^{\top} \mathfrak{D}(s)^{\top}\frac{\partial A}{\partial r_k} A^{-1}b 
        - \frac{1}{2} b^{\top}A^{-1}\frac{\partial A}{\partial r_k}\mathfrak{D}(s)\mathfrak{z} \\
        &\quad - \frac{1}{2} \mathfrak{z}^{\top} \mathfrak{D}(s)^{\top}
        \frac{\partial A}{\partial r_k} \mathfrak{D}(s)\mathfrak{z} 
        + \frac{\partial b}{\partial r_k}^{\top} \mathfrak{D}(s)\mathfrak{z} {.}
        \end{split}
    \end{equation}
    By assumptions, $\norm{\partial A/\partial r_k}_*$,
    $|b|$ and $\norm{A^{-1}}_{2}$ are bounded 
    independently of $r$, and Proposition~\ref{prop:Langevin_conv_rate} 
    states that 
    $\norm{\mathfrak{S}_t^{11} - \mathfrak{S}_{\infty}^{11}}_{2}$ is bounded by $\frac{\gamma}{\delta_\gamma}Te^{-2\delta_\gamma t}$ and 
    $\norm{\mathfrak{D}(s)}_2$ is bounded by $\exp(-\delta_\gamma s)$. 
    Hence there exists a constant $C>0$ which is independent of
    $x,y,r,\gamma$ and $T$ such that 
    \[
    \Big\vert \Tr\left[\frac{\partial A}{\partial r_k}(\mathfrak{S}_t^{11} - \mathfrak{S}_{\infty}^{11})\right] \Big\vert \le 
    \Big\Vert \frac{\partial A}{\partial r_k}\Big\Vert_* \norm{\mathfrak{S}_t^{11} - \mathfrak{S}_{\infty}^{11}}_{2}
    \le C\frac{\gamma}{\delta_\gamma}Te^{-2\delta_\gamma t}.
    \]
    We may use the operator norm to bound the other terms and have, 
    \begin{equation}\label{eqn:Poisson_Exp_Decay}
    \begin{split}
        &\quad \vert v_k(x,y,s;r)\vert \\
        &\leq C\left[\frac{\gamma}{\delta_\gamma}Te^{-2\delta_\gamma s} + e^{-\delta_\gamma s}
        (1+|x|+|y|) + e^{-2\delta_\gamma s}(1+|x|^2+|y|^2)\right] \\
        &\leq C\left[\frac{\gamma}{\delta_\gamma}Te^{-2\delta_\gamma s}+ e^{-\delta_\gamma s}(1+|x|^2+|y|^2)\right]{.}
    \end{split}
    \end{equation}
    For fixed $x,y$, the integrand decays exponentially in time, and thus 
    $\phi$ is well defined. 
    
    (2) $\phi$ is a smooth solution to the Poisson equation.  The
    smoothness directly follows from the computation above. The
    mean-zero condition with respect to $\rho_{\infty}$ is straightforward from the definition of
    $\phi$. The result that $\phi$ satisfies the Poisson equation is standard 
    from the Kolmogorov backward equation. 

    
    
    (3) $\phi$ allows the estimates~\eqref{eqn:Poisson_Estimate}. 
    In fact the first estimate directly follows from integrating~\eqref{eqn:Poisson_Exp_Decay} and 
    \[
    |\phi| \leq C\left[\frac{\gamma}{\delta_\gamma^2}T+ \frac{1}{\delta_\gamma}(1+|x|^2+|y|^2)\right] {.}
    \]
    Furthermore, $\phi$ is a quadratic function 
    of $x$ and $y$, then 
    \[
        \| \nabla_{(x,y)}\phi\|_F \leq C\left[\frac{1}{\delta_\gamma}(1 + |x| + |y|)\right] {.}
    \]
    
    In order to estimate $\nabla_r \phi$, we need to first estimate $\nabla_r \mathfrak{D}(s)$ 
    and $\nabla_r \mathfrak{S}_t^{11}$. This can be done by applying the 
    following formula~\cite{Wilcox1967} 
    \[
        \frac{d}{dt}e^{X(t)} = \int_0^1 e^{\beta X(t)}\frac{dX(t)}{dt}e^{(1-\beta)X(t)}d\beta {.}
    \]
    We have
    \begin{align*}
        \left\|\frac{\partial }{\partial r_k} \mathfrak{D}(s)\right\|_2 
        &\leq \left\|\frac{\partial }{\partial r_k}e^{-\mathfrak{B}s}\right\|_2 \\
        &= C\left\|\int_0^1 e^{-\beta\mathfrak{B}s}\frac{\partial (-\mathfrak{B}s)}{\partial r_k} e^{-(1-\beta)\mathfrak{B}s}d\beta\right\|_2 \\
        &\leq Cs\int_0^1 \left\|e^{-\beta\mathfrak{B}s}\right\|_2 \left\|\frac{\partial \mathfrak{B}}{\partial r_k}\right\|_2 \left\|e^{-(1-\beta)\mathfrak{B}s}\right\|_2 d\beta \\
        &\leq Cs\int_0^1 e^{-\beta \delta_\gamma s}e^{-(1-\beta)\delta_\gamma s} d\beta \\
        &= Cse^{-\delta_\gamma s} {,}
    \end{align*}
    and 
    \begin{align*}
        \left\|\frac{\partial }{\partial r_k} (\mathfrak{S}_t^{11} - \mathfrak{S}_{\infty}^{11})\right\|_2 
        &\leq  \left\|\frac{\partial }{\partial r_k} (\mathfrak{S}_t - \mathfrak{S}_{\infty})\right\|_2 \\
        &= C\left\|\frac{\partial }{\partial r_k}\int_t^{\infty} e^{-\mathfrak{B}s}
        \left(\begin{array}{cc}
            0 & 0 \\
            0 & 2\gamma T I_d
        \end{array}\right)
        e^{-\mathfrak{B}^{\top}s}ds\right\|_2 \\
        &\leq C\left\|\int_t^{\infty} \frac{\partial }{\partial r_k}(e^{-\mathfrak{B}s})
        \left(\begin{array}{cc}
            0 & 0 \\
            0 & 2\gamma T I_d
        \end{array}\right)
        e^{-\mathfrak{B}^{\top}s}ds\right\|_2 \\ 
        & \quad + C\left\|\int_t^{\infty} e^{-\mathfrak{B}s}
        \left(\begin{array}{cc}
            0 & 0 \\
            0 & 2\gamma T I_d
        \end{array}\right)
        \frac{\partial }{\partial r_k}(e^{-\mathfrak{B}^{\top}s)}ds\right\|_2 \\
        & \leq C\gamma T \int_t^{\infty}\left\|\frac{\partial }{\partial r_k}(e^{-\mathfrak{B}s})\right\|_2\left\|e^{-\mathfrak{B}^{\top}s}\right\|_2ds \\
        &\quad + C\gamma T \int_t^{\infty}\left\|e^{-\mathfrak{B}s}\right\|_2\left\|\frac{\partial }{\partial r_k}(e^{-\mathfrak{B}^{\top}s)}\right\|_2ds \\
        & \leq C\gamma T \int_t^{\infty} se^{-2\delta_\gamma s}ds \\
        & \leq C\gamma T \left(\frac{1}{\delta_\gamma}te^{-2\delta_\gamma t} + \frac{1}{\delta_\gamma^2}e^{-2\delta_\gamma t}\right)  {.}
    \end{align*}
    Then Eq.~\eqref{eqn:Poisson_Integrand} indicates
    \begin{align*}
        & \quad \left|\frac{\partial }{\partial r_k} v_j(x,y,s;r)\right| \\
        &\leq C\left[ \frac{\gamma}{\delta_\gamma}Te^{-2\delta_\gamma s} + e^{-\delta_\gamma s}
        (1+|x|+|y|) + e^{-2\delta_\gamma s}(1+|x|^2+|y|^2)\right] \\
        & \quad + C\left[\gamma T \left(\frac{1}{\delta_\gamma}se^{-2\delta_\gamma s} + \frac{1}{\delta_\gamma^2}e^{-2\delta_\gamma s}\right) + se^{-\delta_\gamma s}(1+|x|+|y|) + se^{-2\delta_\gamma s}(1+|x|^2+|y|^2) \right] \\
        & \leq C\left[ \frac{\gamma}{\delta_\gamma}Te^{-2\delta_\gamma s} + e^{-\delta_\gamma s}
        (1+|x|^2+|y|^2) \right] \\
        & \quad + C\left[\gamma T \left(\frac{1}{\delta_\gamma}se^{-2\delta_\gamma s} + \frac{1}{\delta_\gamma^2}e^{-2\delta_\gamma s}\right) + se^{-\delta_\gamma s}(1+|x|^2+|y|^2)\right] {.}
    \end{align*}
    Integrate  with respect to $s$ and we get 
    \begin{align*}
        \|\nabla_r \phi\|_2 \leq C\left[ \frac{\gamma}{\delta_\gamma^2}T + \frac{1}{\delta_\gamma}(1+|x|^2+|y|^2) + \frac{\gamma }{\delta_\gamma^3}T + \frac{1}{\delta_\gamma^2}(1+|x|^2+|y|^2)\right] {.}
    \end{align*}
\end{proof}

Note that in the proof of Proposition~\ref{prop:Poisson}, we have
already computed $\int h\rho_{\infty}$ in
Eq.~\eqref{eqn:Averaged_RHS_Computation}.  This is exactly the average
of the right hand side of~\eqref{eqn:SDE} with respect to the
invariant measure of the fast variables $x$ and $y$, and we obtain an
explicit formulation of the averaged dynamics. Therefore the averaged
equation defined as Eq.~\eqref{eqn:Averaged_definition} can be
equivalently given as Eq.~\eqref{eqn:Averaged_ODE}.

\subsection{Proof of Theorem~\ref{thm:SDE_Error}
and~\ref{thm:Averaged_Error}}\label{sec:proof_remain}

Since we have already obtained estimates of the solution to the Poisson
equation for the degenerate Langevin generator, we can generalize the method
in~\cite{PavliotisStuart2008} to prove Theorem~\ref{thm:SDE_Error}.

\begin{proof}[Theorem~\ref{thm:SDE_Error}]
    Notice that the generator for~\eqref{eqn:SDE} is 
    \[
        \mathcal{L} = \frac{1}{\sqrt{\ve}}\mathcal{L}_0 
        + \mathcal{L}_1
    \]
    where $\mathcal{L}_0$ is given in~\eqref{eqn:Langevin_Generator} and
    \[
        \mathcal{L}_1 = p\cdot \nabla_r + h(r,x)\cdot \nabla_p {.}
    \]
    Now we apply the It\^{o} formula to $\phi(\xet,\yet;\ret)$ 
    and obtain
    \begin{equation*}
    \begin{split}
        \frac{d\phi}{dt}(\xet,\yet;\ret) 
        &= \frac{1}{\sqrt{\ve}}\mathcal{L}_0\phi(\xet,\yet;\ret) 
        + \nabla_r\phi(\xet,\yet;\ret)\pet\\ 
        &\quad + \frac{\sqrt{2\gamma T}}{\ve^{1/4}} \nabla_y\phi(\xet,\yet;\ret) \frac{dW}{dt} {.}
    \end{split}
    \end{equation*}
    Let us introduce the notation 
    \[
    \Bar{h}(r) = F(r)-\left(\frac{1}{2}b^{\top}A^{-1}\frac{\partial A}{\partial r}A^{-1}b-b^{\top}A^{-1}\frac{\partial b}{\partial r}\right)(r){.}
    \]
    Notice that $\phi$ is the solution to the Poisson equation~\eqref{eqn:Poisson}, 
    and we obtain 
    \begin{align*}
        \frac{d\pet}{dt} &= h(\ret,\xet) \\
        &= \Bar{h}(\ret) + Tg(\ret) + \mathcal{L}_0\phi(\xet,\yet;\ret) \\
        &= \Bar{h}(\ret) + Tg(\ret) + \ve^{1/2}\frac{d\phi}{dt}(\xet,\yet;\ret) \\
        &\quad - \ve^{1/2} (\nabla_r\phi(\xet,\yet;\ret)) \pet 
        - \ve^{1/4}\sqrt{2\gamma T} \nabla_y\phi(\xet,\yet;\ret) \frac{dW}{dt} {.}
    \end{align*}
    We define 
    \begin{equation*}
    \begin{split}
               \theta(t) &= \phi(\xet(t),\yet(t);\ret(t)) 
       - \phi(\xet(0),\yet(0);\ret(0)) \\
       &\quad - \int_{0}^{t}
       (\nabla_r\phi(\xet(s),\yet(s);\ret(s)))\pet(s)ds,  
    \end{split}
    \end{equation*}
    and the martingale term 
    \[
        M(t) = - \int_{0}^{t}\sqrt{2\gamma} \nabla_y\phi(\xet(s),\yet(s);\ret(s)) dW(s) {.} 
    \]
    Then we have
    \[
        \pet(t) = \pet(0) + 
        \int_0^t \left[\Bar{h}(\ret(s)) + Tg(\ret(s))\right]ds 
        + \ve^{1/2}\theta(t) + \ve^{1/4}T^{1/2}M(t) {.}
    \]
    If we compare this with the averaged equation~\eqref{eqn:Averaged_ODE} 
    \[
        \pave(t) = \pave(0) + \int_0^t \left[\Bar{h}(\rave(s)) + Tg(\rave(s))\right]ds{,}
    \]
    and use the initial condition $\pet(0) = \pave(0)$, then we have
    \begin{equation*}
    \begin{split}
      \pet(t) - \pave(t) &= 
    \int_0^t \left[\Bar{h}(\ret(s)) - \Bar{h}(\rave(s)) + 
    Tg(\ret(s)) - Tg(\rave(s))\right]ds \\
        &\quad + \ve^{1/2}\theta(t) + \ve^{1/4}T^{1/2}M(t) {.}
    \end{split}
    \end{equation*}
    For $r$ we simply have
    \[
        \ret(t) - \rave(t) = \int_0^t \left[\pet(s) - \pave(s) \right]ds {.}
    \]
    Define the error function 
    \[
        \mathfrak{e}(t) = \left(\begin{array}{c}
            \ret(t) - \rave(t) \\
            \pet(t) - \pave(t) 
        \end{array}\right) 
    \]
    and the Lipschitz constant 
    \[L = \max\left\{1, \sup_{r\in \RR^d}\left\|\frac{\partial\Bar{h}}{\partial r}\right\|_2 + \sup_{r\in \RR^d} \left\| \frac{\partial g}{\partial r}\right\|_2\right\}{.}\] 
    Then for any $t \in [0,t_f]$, 
    \[
        \vert \mathfrak{e}(t) \vert \leq L\int_0^t \vert \mathfrak{e}(s) \vert ds 
        + \ve^{1/2}\vert \theta(t) \vert 
        + \ve^{1/4} T^{1/2}\vert M(t) \vert{.} 
    \]
    By Proposition~\ref{prop:Poisson}, we obtain 
    \begin{equation*}
    \begin{split}
    \sup_{0\leq t \leq t_f}\vert \theta(t) \vert &\leq  
    C\frac{\gamma}{\delta_\gamma^2}T  +  C\frac{1}{\delta_\gamma} + C\frac{1}{\delta_\gamma}\sup_{0\leq t \leq t_f}(\vert\xet(t)\vert^2 + \vert\yet(t)\vert^2) \\
    &\quad +  C\left(\frac{\gamma}{\delta_\gamma^2}+\frac{\gamma}{\delta_\gamma^3}\right)T\int_0^{t_f}\vert \pet(s) \vert ds \\
    &\quad + C\left(\frac{1}{\delta_\gamma}+\frac{1}{\delta_\gamma^2}\right)\int_0^{t_f}\vert \pet(s) \vert(1+|\xet(s)|^2+|\yet(s)|^2) ds
    \end{split}
    \end{equation*}
    
    and
    \[
        \mathbf{E} \left(\sup_{0\leq t \leq t_f}\vert \theta(t) \vert\right)
        \leq C\left(\frac{\gamma}{\delta_\gamma^2}T + \frac{\gamma}{\delta_\gamma^3}T+\frac{1}{\delta_\gamma}+\frac{1}{\delta_\gamma^2}\right) {.}
    \]
    For the martingale term, the It\^{o} isometry gives 
    \[
        \mathbf{E}\vert \left< M\right>_t\vert^2 
        \leq C\int_0^t 2\gamma \mathbf{E}\| \nabla_y\phi(\ret(s),\xet(s),\yet(s))\|_F^2ds 
        \leq C\frac{\gamma}{\delta_\gamma^2} {,}
    \]
    where $\left<M\right>_t$ is the quadratic variation of the martingale 
    $M(t)$ (Definition 3.18, \cite{PavliotisStuart2008}). 
    By taking expectation of the inequality 
    $\vert \left< M\right>_t\vert^{1/2} \leq \vert \left< M\right>_t\vert^{2} + 1$, we have 
    \[
        \mathbf{E}\vert \left< M\right>_t\vert^{1/2} \leq C\frac{\gamma}{\delta_\gamma^2} + 1{.}
    \]
    Hence, by the Burkholder-Davis-Gundy inequality (Theorem 3.22, \cite{PavliotisStuart2008}), 
    we obtain
    \begin{align*}
        \mathbf{E}\left(\sup_{0\leq t' \leq t}\vert \mathfrak{e}(t')\vert\right) 
        &\leq L\int_0^t \mathbf{E}\vert \mathfrak{e}(s)\vert ds +   
        C\left(\frac{\gamma}{\delta_\gamma^2}T + \frac{\gamma}{\delta_\gamma^3}T+\frac{1}{\delta_\gamma}+\frac{1}{\delta_\gamma^2}\right)\ve^{1/2} \\
        &\quad + \ve^{1/4}T^{1/2}\mathbf{E}\left(\sup_{0\leq t' \leq t}\vert M(t') \vert\right) \\
        &\leq L\int_0^t \mathbf{E}\vert \mathfrak{e}(s)\vert ds +   
        C\left(\frac{\gamma}{\delta_\gamma^2}T + \frac{\gamma}{\delta_\gamma^3}T+\frac{1}{\delta_\gamma}+\frac{1}{\delta_\gamma^2}\right)\ve^{1/2} \\
        & \quad + C\ve^{1/4}T^{1/2} \mathbf{E}\vert \left< M\right>_t\vert^{1/2} \\
        &\leq L\int_0^t \mathbf{E}\sup_{0\leq \tau \leq s}\vert \mathfrak{e}(\tau)\vert ds
        + C\left(\frac{\gamma}{\delta_\gamma^2}T + \frac{\gamma}{\delta_\gamma^3}T+\frac{1}{\delta_\gamma}+\frac{1}{\delta_\gamma^2}\right)\ve^{1/2} \\
        &\quad + C\left(\frac{\gamma}{\delta_\gamma^2} + 1\right)\ve^{1/4}T^{1/2} {.}
    \end{align*}
    By the integral version of the Gronwall inequality, 
    we obtain
    \begin{align*}
        &\quad \mathbf{E}\left(\sup_{0\leq t \leq t_f}\vert \mathfrak{e}(t)\vert\right)\\
        &\leq C \left[\left(\frac{\gamma}{\delta_\gamma^2}T + \frac{\gamma}{\delta_\gamma^3}T+\frac{1}{\delta_\gamma}+\frac{1}{\delta_\gamma^2}\right)\ve^{1/2} + \left(\frac{\gamma}{\delta_\gamma^2} + 1\right)\ve^{1/4}T^{1/2}\right] \\
        & \leq C\left[\left(\frac{1}{\delta_\gamma}+\frac{1}{\delta_\gamma^2}\right)\ve^{1/2} + \left(\frac{\gamma}{\delta_\gamma^2}+1\right)\ve^{1/4}T^{1/2} + \frac{\gamma}{\delta_\gamma^3}\ve^{1/2}T\right]{.} 
    \end{align*}
\end{proof}

Now we move on to Theorem~\ref{thm:Averaged_Error}. 
Compared to the exact dynamics, the averaged equation
formally only involves one additional term, which can be handled by  
the variational equation.

\begin{proof}[Theorem~\ref{thm:Averaged_Error}]
    Define $\Psi(t,s, \eta, \zeta)$ to be the resolvent of the variational equation 
    \[
        \begin{split}
            \dot{\Psi}(t,s,\eta,\zeta) &= 
            \left(\begin{array}{cc}
                0 & I_d \\
                \frac{\partial \Bar{h}}{\partial r}(\mathfrak{u}^{s,t}(\eta,\zeta)) & 0
            \end{array}
            \right)
            \Psi(t,s,\eta,\zeta) {,}\\
            \Psi(s,s,\eta,\zeta) &= I_{2d}
        \end{split}
    \]
    where $\mathfrak{u}^{s,t}(\eta,\zeta)$ is the solution to the averaged 
    equation~\eqref{eqn:Averaged_ODE} 
    with starting time at $s$ and 
    initial value $r(s) = \eta$ and $p(s) = \zeta${.}
    By assumption 1, 2 and 3, $\frac{\partial \Bar{h}}{\partial r}$ is bounded 
    independently of $\eta,\zeta$, thus 
    $\Psi$ is bounded independently of $T$. 
    
    Then by the theorem of Alekseev and Gr\"obner~\cite[Theorem
    14.5]{HairerNorsettWanner1987}, 
    \[
        \begin{pmatrix}
        \rave(t)\\
        \drave(t)
        \end{pmatrix} = 
         \begin{pmatrix}
        r_{\star}(t)\\
        \dot{r}_{\star}(t)
        \end{pmatrix} + 
        T\int_0^t \Psi(t,s,\rave(s),\drave(s))
            \begin{pmatrix}
        0\\
        g(\rave(s))
        \end{pmatrix} ds {.}
    \]
We hereby obtain the desired estimate. 
\end{proof}

\section{Numerical examples}\label{sec:numer}

In this section we verify the accuracy and the order of convergence indicated in
Theorem~\ref{thm:Main_Theorem}. We also demonstrate the efficiency of
Stochastic-XLMD in terms of the reduction of the number of SCF iterations, \ie 
the number of iterations in solving Eq.~\eqref{eqn:General_Dynamics_b} with 
iterative methods. 
 We demonstrate the accuracy and efficiency of the Stochastic-XLMD method for model polarizable force field calculations in Section~\ref{sec:accuracy} and~\ref{sec:efficiency}.
Although our theory is developed for interaction energy $Q$ that is quadratic with respect to $x$, numerical results indicate that the Stochastic-XLMD method is also applicable to $Q$ that has more general dependence on $x$. In Section \ref{sec:general_interaction} we provide such results for a model problem. We further demonstrate the application to a realistic polarizable water problem with long time simulation in Section \ref{sec:water}.
All the calculations for the model problems were carried out
using MATLAB on the Berkeley Research Computing program at the
University of California, Berkeley.
Each node consists of two Intel Xeon 10-core Ivy Bridge processors (20 cores
per node) and 64 GB of memory. 

\subsection{Accuracy}\label{sec:accuracy}
Let us consider a simple two dimensional model 
\begin{equation*}
    U(r) = r_1^2 + r_2^2 = |r|^2 {,} \quad F(r) = - \frac{\partial U}{\partial r} {,}
\end{equation*}
\begin{equation*}
    A(r) = \left(\begin{array}{cc}
        2+|r|^2 & |r|^2 \\
        |r|^2 & 1+|r|^2
    \end{array}\right) {,}
\end{equation*}
\begin{equation*}
    b(r) = (\sin(r_1+r_2), \cos(r_1-2r_2))^{\top} {.}
\end{equation*}
Initial values for the exact MD are
\begin{equation}
    r_{\star}(0) = (0.587, -0.810)^{\top}, \quad p_{\star}(0) = (-1.00,0.500)^{\top} {.}
\end{equation}
Initial values for the Stochastic-XLMD are
\[
    \ret(0) = r_{\star}(0), \quad \pet(0) = p_{\star}(0), 
    \quad \xet(0) = x_{\star}(0) + (0.500,-0.500)^{\top}, 
    \quad \yet(0) = (0,0)^{\top} {.}
\]

The Verlet scheme is used to propagate the exact MD, 
and the BAOAB scheme~\cite{LeimkuhlerMatthews2015} is used to propagate the Stochastic-XLMD. 
The time step size is fixed to be $5.00\times 10^{-6}$, 
which is small enough for all the numerical solutions generated 
in this subsection to be regarded as the exact analytic solution 
under the same parameters. 
Other than the long time simulation reported at the end of this subsection, the time interval is fixed to be $[0,5]$, and all reported errors are the averaged errors of 10 independent simulations. 

First, Theorem~\ref{thm:Main_Theorem} assumes that $\gamma$ should be
$\Or(1)$. To confirm that such choice
can yield the optimal error, we adjust $\gamma$ with respect to various choices of $\ve$
and $T$.  Figure~\ref{fig:comp_SDE_gamma} indicates that in order to
minimize the error, the optimal value of $\gamma$ is indeed a constant
and is around $0.100$ for this example.

\begin{figure}
    \centering
    \includegraphics[width=0.48\linewidth]{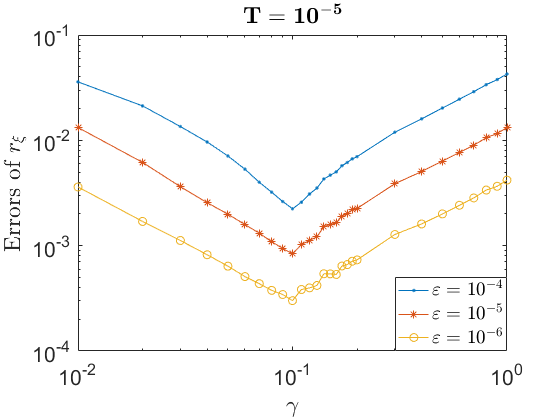}
    \includegraphics[width=0.48\linewidth]{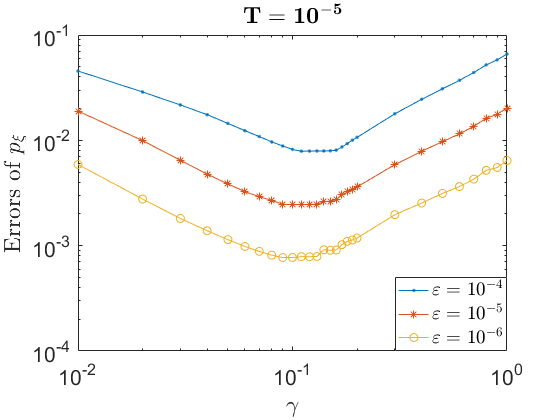}\\
    \includegraphics[width=0.48\linewidth]{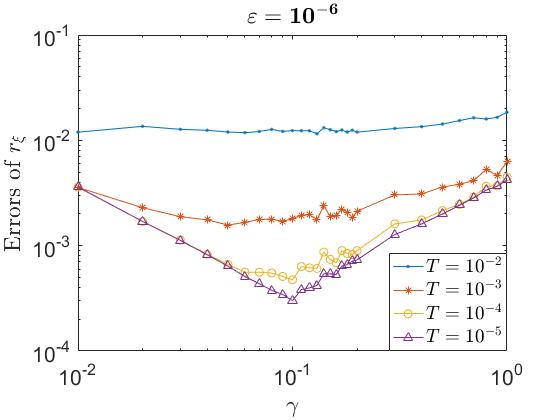}
    \includegraphics[width=0.48\linewidth]{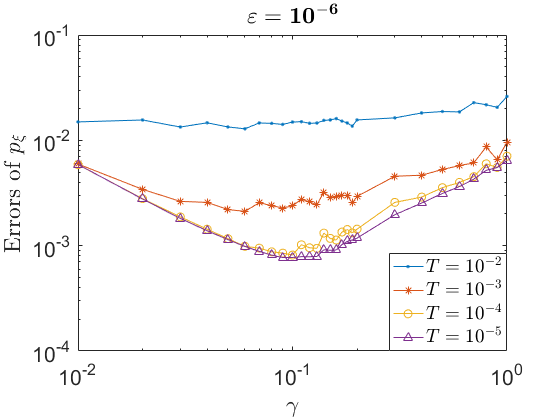}\\
    \caption{Errors of $\ret$ and $\pet$ under different choices of $\gamma$. }
    \label{fig:comp_SDE_gamma}
\end{figure}

Now we fix $\gamma = 0.100$ and $T = 10^{-5}$ 
and study the dependence on $\ve$. 
Figure~\ref{fig:comp_SDE_ep} shows that 
under such choice of $\gamma$ and $T$, 
the errors of $r$ and $p$ decrease as $\ve$ becomes smaller. 
The order of convergence, estimated using data points with
$\ve \leq 10^{-4}$, is $0.402$ for $r$ and $0.509$ for $p$. 
Furthermore, there is no essential difference among different choices of 
$T = 10^{-4}, 10^{-5}, 10^{-6}$. 
This is because $T$ is sufficiently small so that the error is dominated by 
the averaging error shown in Theorem~\ref{thm:SDE_Error}. Also, 
since $T$ is very small, the $\Or(\ve^{1/4}T^{1/2})$ term almost vanishes and 
we can only observe the half order convergence with respect to
$\ve$. 

Then we fix $\gamma = 0.100$ and study the dependence on $T$ 
with $\ve = 10^{-4},10^{-5},10^{-6}$.
Figure~\ref{fig:comp_SDE_Ts} shows that when $T$ decreases, 
the errors of $r$ and $p$ decrease accordingly, until limited by the
systematic error due to $\ve$. 
The numerical order of convergence, estimated using the first five points 
with $\ve = 10^{-6}$, is $0.964$ for $r$ and $0.933$ for $p$. 
In this case, $\ve$ is small enough and we can only observe the
$\ve$-independent part of the contribution of the error as described in Theorem~\ref{thm:Averaged_Error}. 

Our analysis indicates that the optimal strategy for choosing $\ve$ and
$T$ is that $T \sim \sqrt{\ve}$. To confirm this,
Figure~\ref{fig:comp_SDE_ep_Ts} shows the errors with $\gamma = 0.100$ and
$T = \sqrt{\ve}$. Under such scaling, both $r$ and $p$ converges as
$\ve\to 0$.  
The order of convergence, 
estimated by data points with $\ve \leq 2.00\times 10^{-4}$, 
is $0.505$ for $r$ and $0.506$ for $p$. 
This yields excellent agreement with Theorem~\ref{thm:Main_Theorem}. 

All the numerical convergence orders are collected in Table~\ref{tab:Conv_Order}.

To conclude this example, we perform a long time simulation up to $T_f = 100$ and observe how the errors of Stochastic-XLMD accumulate in energy, which is computed as 
$$E_{\xi}(t) = \frac{1}{2}|p_{\xi}(t)|^2 + U(r_{\xi}(t)) + \frac{1}{2}x_{\xi}(t)^{\top}A(r_{\xi}(t))x_{\xi}(t) - b(r_{\xi}(t))^{\top}x_{\xi}(t).$$
We fix the parameter $\gamma = 0.1$, and choose $T = \sqrt{\ve}$ with different choices of $\ve$. Unlike previous short time simulations, we only perform a single long time simulation for each choice of parameter. 
Figure~\ref{fig:SDE_long_time} shows the results with $\ve = 5\times 10^{-5}$ and $\ve = 10^{-5}$, together with the exact energy of the system (around 1.537). 
We observe that, although the initial condition is artificially perturbed, resulting in the initial energy to be around 1.912, stochastic-XLMD can correct the energy within a few time steps. 
Specifically, in this example, the error of energy is corrected to be very close to the exact energy within $t = 0.3$. 
As we proved, smaller $\ve$ results in smaller energy drift. 
Furthermore, numerically the long time drift of the energy seems mild and grows linearly with respect to time.

\begin{figure}
    \centering
    \includegraphics[width=0.6\linewidth]{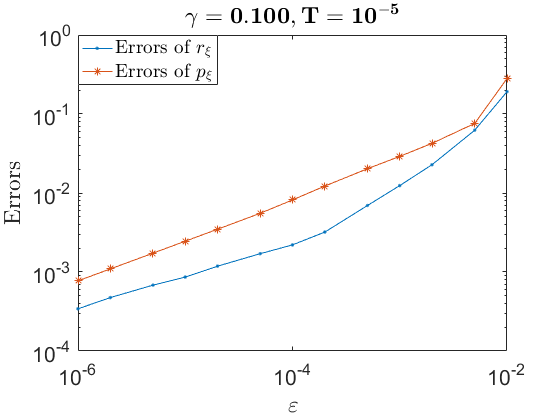}\\
    \caption{Errors of $\ret$ and $\pet$ under different choices of
    $\ve$}
    \label{fig:comp_SDE_ep}
\end{figure}

\begin{figure}
    \centering
    \includegraphics[width=0.48\linewidth]{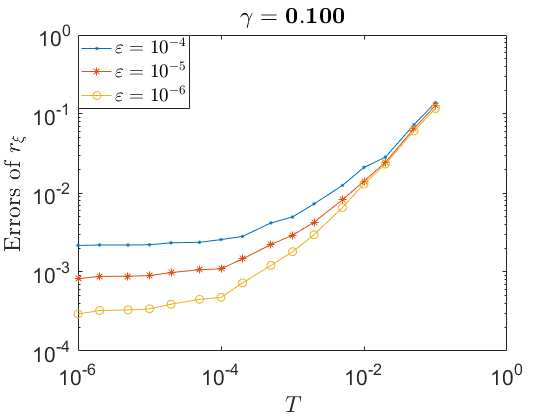}
    \includegraphics[width=0.48\linewidth]{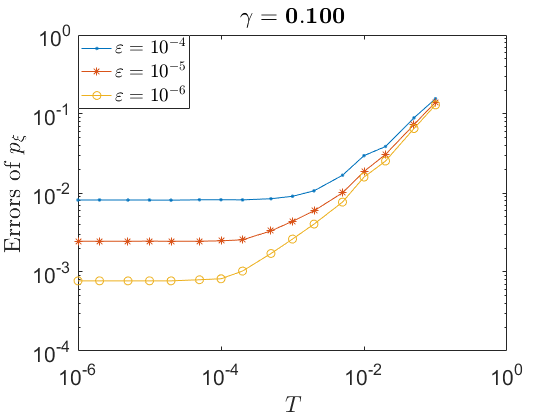}\\
    \caption{Errors of $\ret$ and $\pet$ under different choices of $T$. }
    \label{fig:comp_SDE_Ts}
\end{figure}

\begin{figure}
    \centering
    \includegraphics[width=0.6\linewidth]{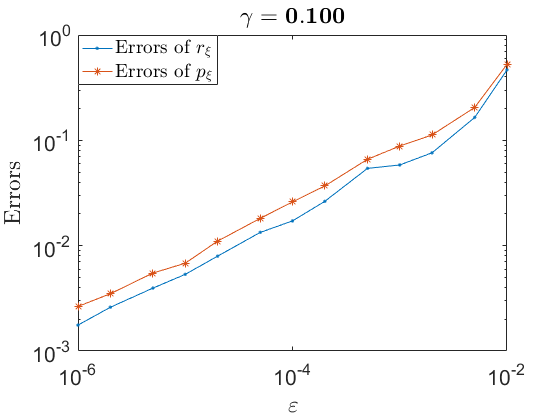}\\
    \caption{Errors of $\ret$ and $\pet$ under different choices of $\ve$
    and $T$ with $\ve = \sqrt{T}$.}
    \label{fig:comp_SDE_ep_Ts}
\end{figure}

\begin{figure}
    \centering
    \includegraphics[width=0.48\linewidth]{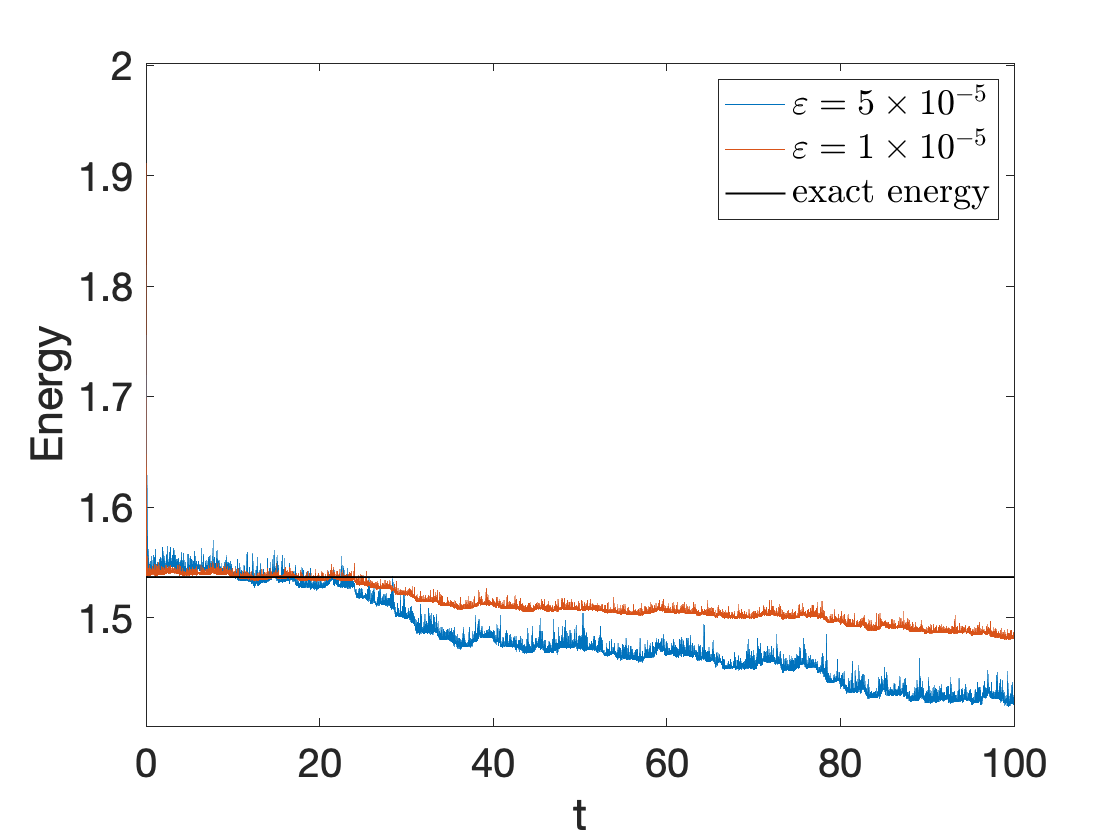}
    \includegraphics[width=0.48\linewidth]{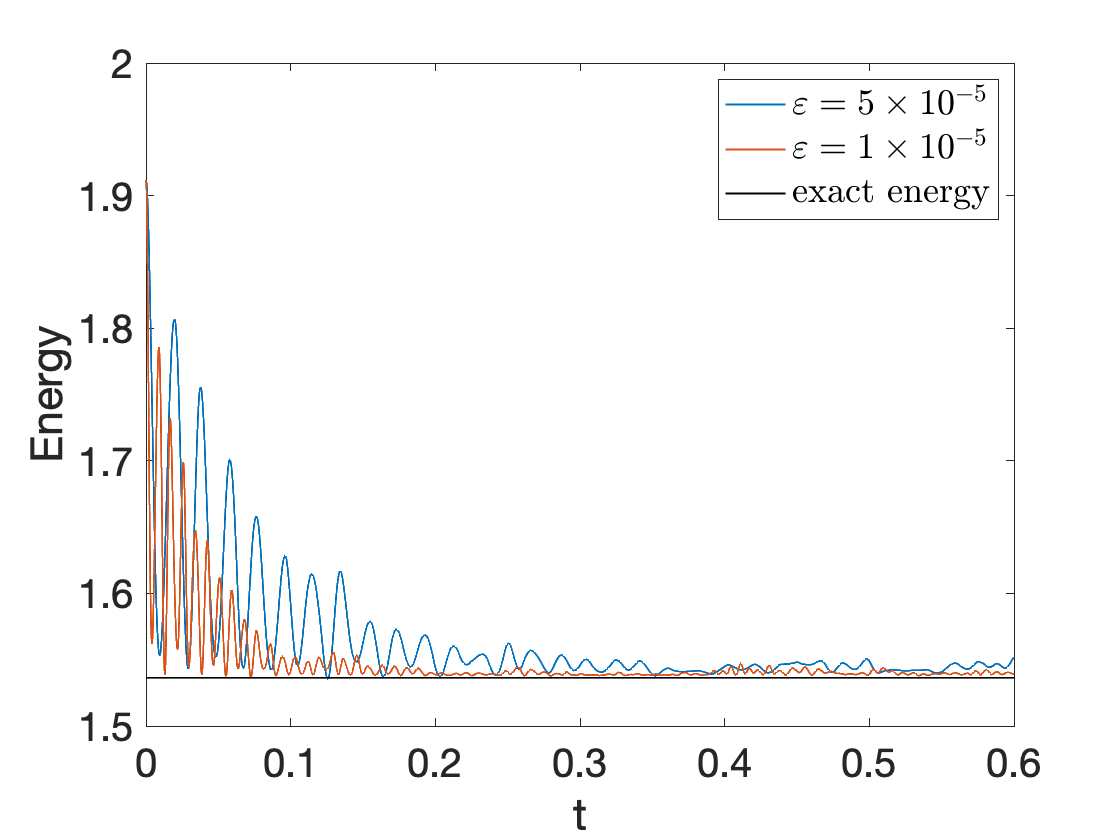}
    \caption{Energy of the long time simulation (left) and a zoom-in view at the beginning (right) under different choices of $\varepsilon$, $T = \sqrt{\varepsilon}$ and $\gamma = 0.1$.}
    \label{fig:SDE_long_time}
\end{figure}

\begin{table}
    \centering
    \begin{tabular}{cc|cc}
        fixed parameter & variable & order for $\ret$ & order for $\pet$ \\\hline
       $\gamma = 0.100$, $T = 10^{-5}$  & $\ve$    & 0.402 & 0.509 \\
       $\gamma = 0.100$, $\ve = 10^{-6}$    & $T$  & 0.964 & 0.933 \\ 
       $\gamma = 0.100$  & $\ve$, $T = \sqrt{\ve}$ & 0.505 & 0.506 
    \end{tabular}
    \caption{Numerical convergence orders for $\ret$ and $\pet$.}
    \label{tab:Conv_Order}
\end{table}

\subsection{Efficiency}\label{sec:efficiency}

After establishing the accuracy of Stochastic-XLMD method, we demonstrate
that with proper choice of parameters, Stochastic-XLMD indeed improves the
efficiency by reducing the number of iterations for solving the
nonlinear system~\eqref{eqn:General_Dynamics_b}. This is the case for
the polarizable force field model as proved in
Theorem~\ref{thm:Main_Theorem}. 
%

 
Let $r\in \R^3$ and $x \in \R^{20}$. Consider 
$F = -\partial U/\partial r$ with 
\[
    U = \frac{1}{4}|r|^4 + \frac{1}{100}\cos(400(r_1+r_2+r_3)){.}
\]
For the polarizable force field model, the non-zero entries in $A$ are
given by $A_{k,k} = 2+|r|^2$, $A_{k,k+1} = A_{k+1,k} = -1$, $A_{k,k+2} =
A_{k+2,k} = (1 - |r|^2)/2$.  $b_k = \sin(kr_2/10 + (1-k/20)r_2 + r_3), k = 1,\cdots, 20$. 
The choice of parameters are motivated from practical polarizable force
field calculations, where the force $F$ is strong and dominates the
dynamics at short time scale, while the interaction energy affects the
dynamics at long time scale. 
The time interval is fixed to be $[0,5]$. 
Initial values are $r(0) = (0,0.500,1.00)^{\top}$, $p(0) = (1.00,0.500,-1.00)^{\top}$.

We compare numerical performance of MD (directly propagating MD~\eqref{eqn:General_Dynamics}) and Stochastic-XLMD.
For MD, we use the Verlet scheme to propagate the dynamics, and use
the conjugate gradient method (CG) to solve the SCF
iterations (i.e., solving the linear system).  The reference solution is
obtained with MD with a very small time step size $2.50\times 10^{-6}$,
and the SCF tolerance (measured in terms of the residue 
$|b-Ax|$) is set to $10^{-10}$.  
For Stochastic-XLMD, the BAOAB scheme is used for time propagation, and
the time step size $1/2500$.  Other parameters are chosen to be $\ve =
5.00\times 10^{-7}$, $T = \sqrt{\ve}/1000$, $\gamma = 0.500$. 
In order to demonstrate the efficiency of Stochastic-XLMD, we perform 
MD simulation with the same time step size $1/2500$. 
The stopping criteria is set to be $10^{-6}$. We remark that such choice of
tolerance is at the threshold, in the sense that the error of $p$ indeed
increases if we set the tolerance to be larger. 
Such parameters are chosen such that all the dynamics are almost 
indistinguishable with the reference solution till $t = 3$ and 
remain reasonably accurate within the whole time interval. 
See Figure~\ref{fig:comp_MD_SDE} for a comparison of $r$ and $p$ obtained by
different methods. 

\begin{table}
    \centering
    \begin{tabular}{c|cc|cc}
        Method & Errors of $r$ & Errors of $p$ & Number of $Ax$ & Number of $(\partial_{r_k}A)x$ \\\hline
      MD            & 0.0507 & 0.228 & 100392 &  37503  \\
      Stochastic-XLMD & 0.0401 & 0.295 & 12518  &  37503 \\
    \end{tabular}
    \caption{Numerical errors and computational costs of 
    MD and Stochastic-XLMD applied to the polarizable force field model. }
    \label{tab:Comp_MD_SDE}
\end{table}

\begin{figure}
    \centering
    \includegraphics[width=0.48\linewidth]{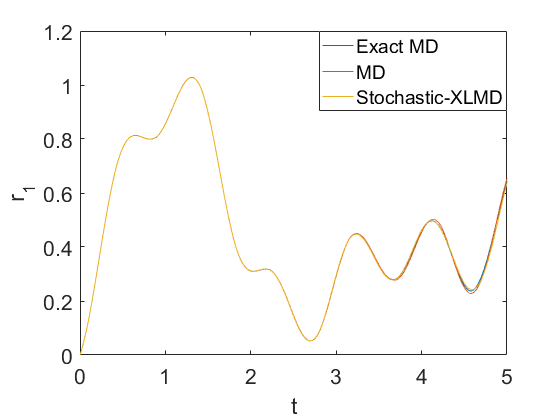}
    \includegraphics[width=0.48\linewidth]{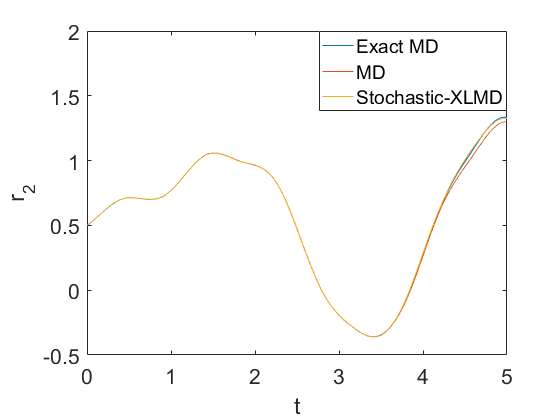}\\
    \includegraphics[width=0.48\linewidth]{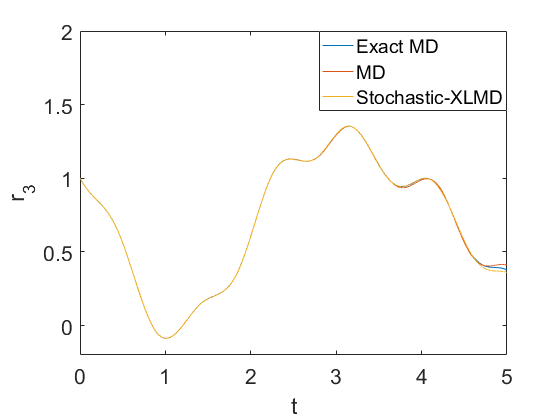}
    \includegraphics[width=0.48\linewidth]{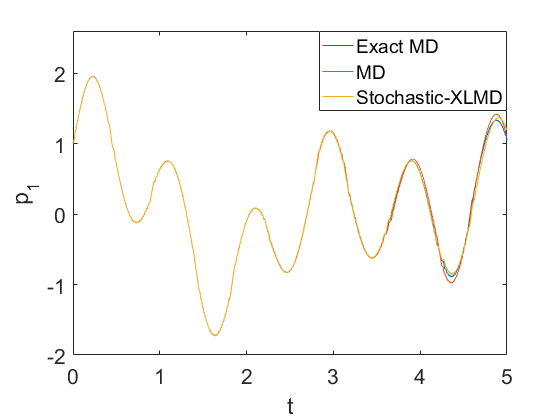}\\
    \includegraphics[width=0.48\linewidth]{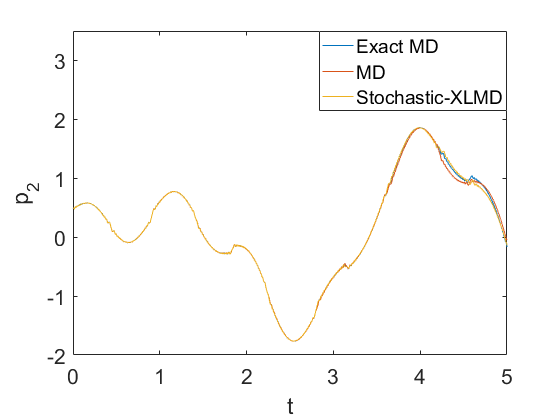}
    \includegraphics[width=0.48\linewidth]{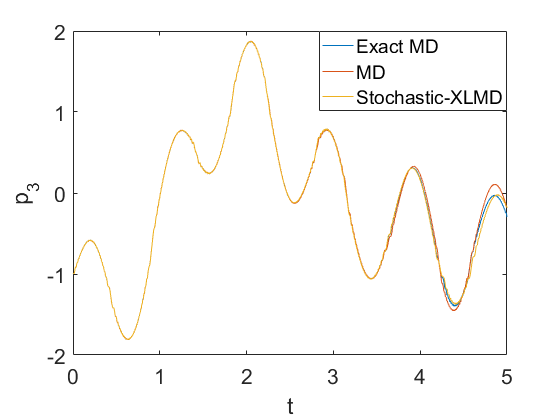}\\
    \caption{Comparison of $r$ and $p$ obtained by MD and 
    Stochastic-XLMD applied to the polarizable force field model. }
    \label{fig:comp_MD_SDE}
\end{figure}

Table~\ref{tab:Comp_MD_SDE} compares numerical errors and computational 
costs of MD and Stochastic-XLMD. Here the error in Stochastic-XLMD reported 
is computed by taking average of 10 independent simulations. 
The computational cost is measured by the number of matrix-vector multiplications.  
In each time step, the number 
of $Ax$ is equal to the number of SCF iterations plus one. 
We find that Stochastic-XLMD achieves similar accuracy compared to MD,
but reduces the number of SCF iterations by 87.5\%. After taking into
account the matrix-vector multiplication operations due to
$(\partial_{r_k}A) x$ for computing the force, Stochastic-XLMD still
reduces the total matrix-vector multiplications by  63.7\%.


\subsection{General form of interaction energy}\label{sec:general_interaction}

Numerical results indicate that
the same behavior can also be observed for more general interaction
energy that is non-quadratic with respect to $x$ as well. In both cases,
the interaction energy is nonlinear with respect to $r$.

Next we test the effectiveness and efficiency of Stochastic-XLMD 
applied to a system with interaction energy $Q$ that is non-quadratic
with respect to $x$.
More specifically, we set 
\[
    Q = \frac{1}{2}x^{\top}A(r)x - x^{\top}b(r) + 0.150\left(\vert x\vert^2+\frac{1}{2}\sum_{k=1}^{20}\sin(2x_k)\right) {.}
\]
Such choice of $Q$ will ensure that the Hessian matrix with respect to 
$x$ is uniformly positive definite, which means that the system of 
nonlinear equations 
\[
    0 = -\frac{\partial Q}{\partial x} 
\]
has a unique solution and the dynamics is well-defined. 

We use Anderson mixing without preconditioning~\cite{Anderson1965} to solve the system 
of nonlinear equations, and all other numerical treatments remain 
to be the same. 
In Anderson mixing, the SCF tolerance is chosen to be $10^{-6}$. 
Such choice is again relatively tight, and further increase of the tolerance 
will increase the numerical errors in both $r$ and $p$. 
The mixing parameter $\alpha$ is set to be $0.100$ to ensure convergence, and
the mixing dimension is $5$. 
The reference solution is obtained with very small time step size $2.50\times 10^{-6}$.  
In the MD simulation, the time step size is chosen to be
$1/2000$, 
while the time step size in Stochastic-XLMD is $1/2500$. 
Other parameters in Stochastic-XLMD are 
$\ve = 2.50\times 10^{-7}$, $T = \sqrt{\ve}/10000$, $\gamma = 0.100$. 
Again, such parameters are chosen for all the dynamics to be almost 
indistinguishable with the reference solution till $t = 3.5$ and 
remain reasonably accurate within the whole time interval. 
See Figure~\ref{fig:comp_MD_SDE_nonlinear} for a comparison of $r$ and $p$
obtained by different methods.

\begin{figure}
    \centering
    \includegraphics[width=0.4\linewidth]{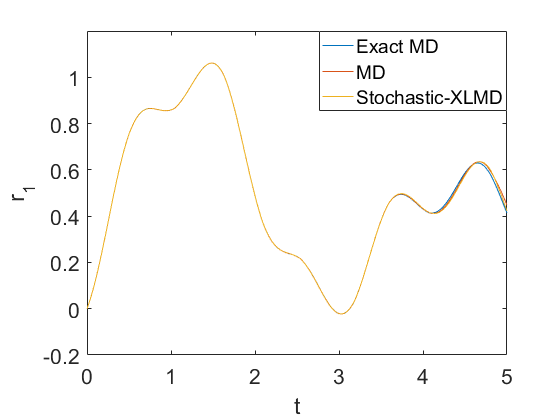}
    \includegraphics[width=0.4\linewidth]{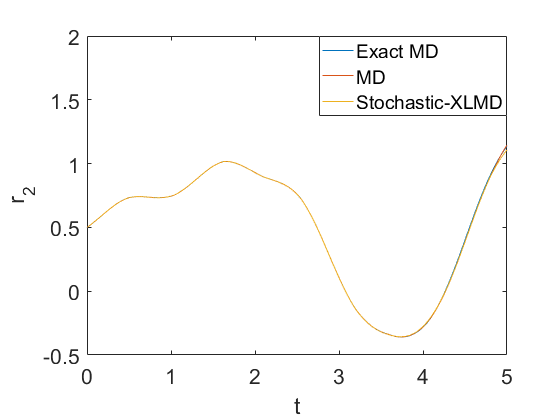}\\
    \includegraphics[width=0.4\linewidth]{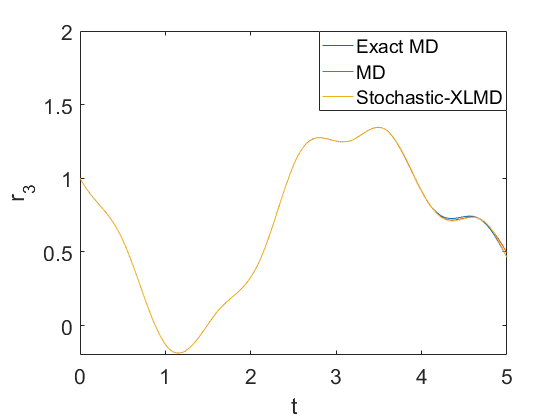}
    \includegraphics[width=0.4\linewidth]{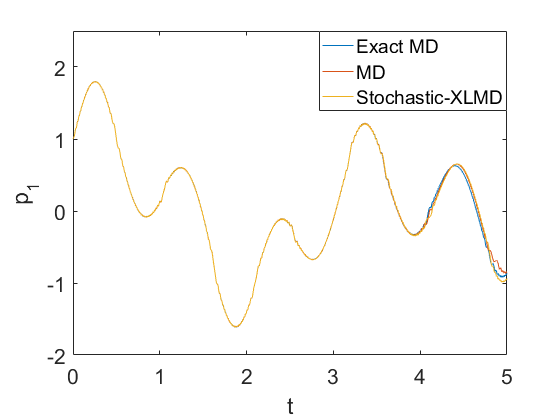}\\
    \includegraphics[width=0.4\linewidth]{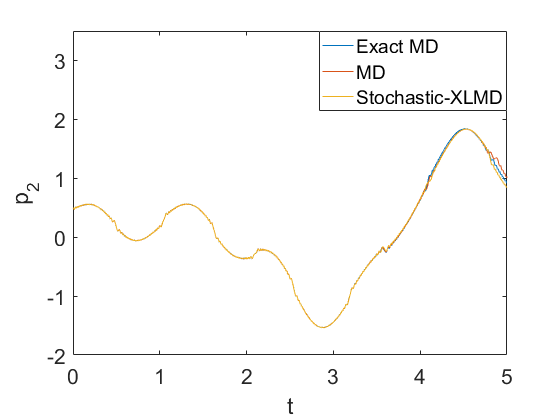}
    \includegraphics[width=0.4\linewidth]{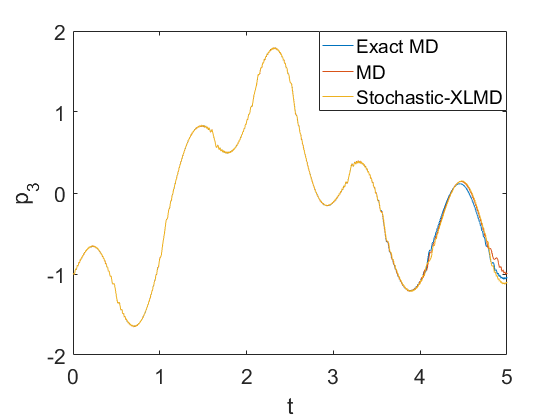}\\
    \caption{Comparison of $r$ and $p$ obtained by MD and 
    Stochastic-XLMD applied to the nonlinear example. }
    \label{fig:comp_MD_SDE_nonlinear}
\end{figure}

\begin{table}
    \centering
    \begin{tabular}{c|cc|c}
        Method & Errors in $r$ & Errors in $p$ & Number of nonlinear evaluations  \\\hline
      MD            & 0.0459 & 0.318 & 128763 \\
      Stochastic-XLMD & 0.0540 & 0.301 & 12601  \\
    \end{tabular}
    \caption{Numerical errors and computational costs of 
    MD and Stochastic-XLMD applied to the nonlinear example. }
    \label{tab:Comp_MD_SDE_nonlinear}
\end{table}

Table~\ref{tab:Comp_MD_SDE_nonlinear} compares numerical errors and
computational costs of MD and Stochastic-XLMD. 
Here the error in Stochastic-XLMD reported 
is computed by taking average of $10$ independent simulations. 
The computation cost 
is measured by the number of the number of nonlinear evaluations, 
in particular, the number of evaluating $\partial Q/ \partial x$. 
In each time step, this number is equal to the number of SCF iterations
plus one.  Similarly with the polarizable force field model, numerical errors of 
MD and Stochastic-XLMD are comparable, while 90.2\% 
of nonlinear evaluations are reduced by using Stochastic-XLMD. 

\subsection{Polarizable model for water}\label{sec:water}

\begin{figure}
    \centering
    \includegraphics[width=1\linewidth]{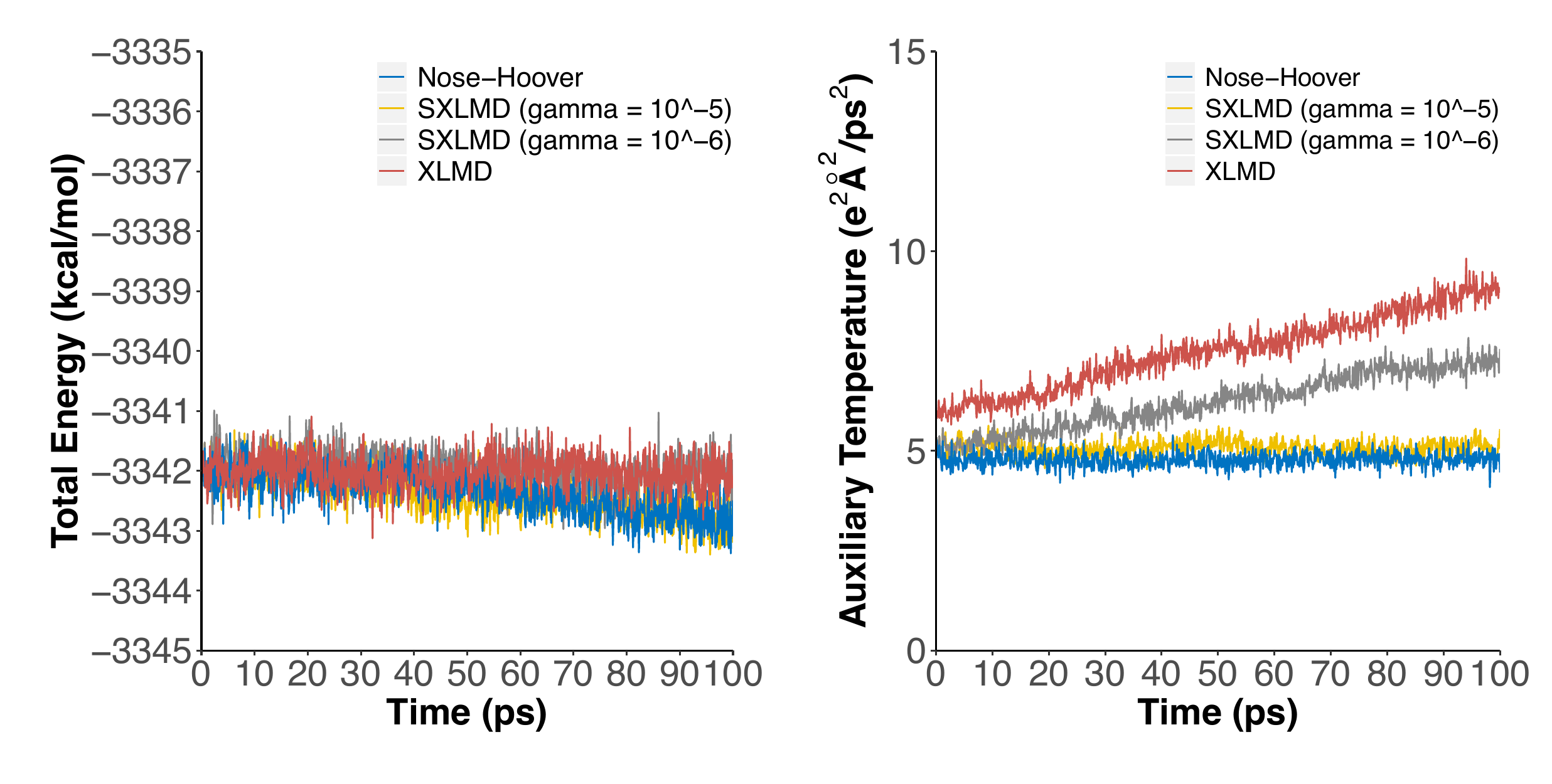}
    \caption{Comparison of total energy drift rates and auxiliary temperatures
    obtained  using XLMD, Nose Hoover iEL/0-SCF, and the Stochastic-XLMD method applied to the polarizable water AMOEBA14~\cite{Laury2015} potential energy model. (a) Energy drift rates (kcal/mol/ps) were found to decrease in order from Nose-Hoover iEL/0-SCF  (blue, $-9.032\times 10^{-3}$), Stochastic-XLMD with $\gamma=10^{-5}$ (yellow, $-8.431\times 10^{-3}$), Stochastic-XLMD with $\gamma=10^{-6}$ (grey, $-3.139\times 10^{-3}$), and XLMD (red, $-7.518\times 10^{-4}$).  For comparison, the total energy drift rate with a standard pre-conditioned conjugate-gradient self-consistent field method\cite{Wang2005} is  $-8.326\times 10^{-4}$ kcal/mol/ps using a $10^{-6}$ RMS Debye dipole convergence criteria. (b) the auxiliary temperature over time shows that  there is a kinetic buildup of error that is not dissipated in the XLMD approach, whereas the kinetic energy is well-dissipated by the thermostatting methods. The MD simulation is comprised of 512 water molecules simulated for 100 ps in the NVE Ensemble with a 0.5 fs time-step, and energy and temperature reported at a 0.1 ps output rate. For the Nose-Hoover iEL/0-SCF and Stochastic-XLMD results, $\gamma_{\text{aux}}=0.9$. }
    \label{fig:long_time2}
\end{figure}

We have also applied the Stochastic-XLMD approach to a more realistic atomic polarizable model for 512 water molecules simulated with the AMOEBA polarizable force field~\cite{Laury2015}. Figure~\ref {fig:long_time2}a provides a comparison of energy conservation in the NVE ensemble (a vital quantity for correct Hamiltonian dynamics) between the original iEL/0-SCF method~\cite{AlbaughNiklassonHead-Gordon2017} and Stochastic-XLMD and XLMD simulations. The difference in methods applied to this real world polarizable  system resides in the treatment of the auxiliary thermostats, and thus we also report the kinetic energy proxy for the latent variables in Figure~\ref {fig:long_time2}b.

For the original iEL/0-SCF approach, the temperature of the auxiliary degrees of freedom are controlled with a 4th order Nose-Hoover thermostat~\cite{AlbaughDemerdashHead-Gordon2015,AlbaughNiklassonHead-Gordon2017}, and requires the determination of an optimal value of $\gamma_{\text{aux}}$=$dt^{2}/(2\varepsilon$) for best energy conservation~\cite{AlbaughDemerdashHead-Gordon2015,AlbaughNiklassonHead-Gordon2017}, where $dt$ is the time step size. For the Stochastic XLMD method we have determined optimal $\gamma$ values of $10^{-5}$ to $10^{-6}$ for $\gamma_{\text{aux}}=0.9$ to generate acceptable energy drift on par with the original iEL/0-SCF. 
In fact the energy drift rate of the XLMD approach is comparable to that of a standard self-consistent field iterative procedure~\cite{Wang2005} with reasonably tight convergence, as is confirmed in Figure~\ref {fig:long_time2}a. This would suggest that the thermostatted methods offer no significant advantage to XLMD!

However, trajectories of the auxiliary temperature over time shows that while XLMD does conserve energy better than Stochastic-XLMD or Nose-Hoover on the short timescale, there is a kinetic buildup of error that is not dissipated in the XLMD approach (Figure~\ref {fig:long_time2}b). This corruption of the auxiliary dynamics will ultimately feed back into the real degrees of freedom, creating ``resonances'' that will result in long-term instability of the XLMD algorithm.   By contrast, Stochastic-XLMD and Nose-Hoover iEL/0-SCF methods control the kinetic energy buildup better than XLMD, as expected. For a choice of  $\gamma$=$10^{-5}$, the Stochastic-XLMD method is a good compromise between energy conservation and long term stability; furthermore Stochastic-XLMD is an excellent alternative to Nose-Hoover thermostats because of its lighter weight overhead compared to thermostatted chains.
 
\section{Conclusion}\label{sec:conclusion}

In this work, we consider a stochastic-extended Lagrangian molecular
dynamics method, by introducing numerical fluctuation and dissipation 
through a Langevin type thermostat.
For a simple polarizable force field model, with a suitable choice of the
Lagrangian, we yield the Stochastic-XLMD method which generalizes the recently
proposed iEL/0-SCF method~\cite{Niklasson2012,AlbaughNiklassonHead-Gordon2017,AlbaughHead-Gordon2017}.
We prove that the Stochastic-XLMD method converges to accurate
dynamics, and the convergence rate is sharp with respect to the singular perturbation parameter
$\ve$ and the numerical temperature $T$. 
We also analyze 
the impact of the damping factor in the
Langevin dynamics and identify the optimal choice. 
While our analysis is done for a simple polarizable force field model where the
interaction energy is quadratic with respect to the latent degrees of
freedom, we have shown that our results can be generalized 
to accommodate more general interaction energy forms such as the atomistic polarizable model AMOEBA for liquid water.\cite{Laury2015} 
Interesting future directions include theoretical understanding of the
convergence of the Stochastic-XLMD scheme for other models such as the
Kohn-Sham density functional theory or for reactive force fields \cite{ReaxFF}, and the convergence of the original
iEL/0-SCF scheme in the absence of noise. 

\section*{Acknowledgments} 

This work was partially supported by the National Science Foundation
under grant DMS-1652330 (D.A. and L.L.) and DMS-1454939 (J.L.), by the
Department of Energy under grant DE-SC0017867 (L.L.) and the U.S.
Department of Energy, Office of Science, Office of Advanced Scientific
Computing Research, Scientific Discovery through Advanced Computing
(SciDAC) program (T.H.-G. and L.L.). S.Y.C also thanks the
Berkeley-France Fund for support of this work. We thank Berkeley Research
Computing (BRC) for computational resources. We thank Christian
Lubich, Anders Niklasson and Chao Yang for helpful discussions.

\appendix

\section{Proof of Proposition~\ref{prop:Poisson}}\label{append:proof_Prop5}

(a) Since $A$ is a positive definite matrix, there exist 
$\lambda_1 \geq \cdots \geq \lambda_d > 0$ and an orthonormal
basis $\{v_k\}_{k=1}^{d}$ of $\RR^{d}$ which satisfy 
\[
    Av_{k} = \lambda_k v_{k}, \quad k = 1, \cdots, d {.}
\]
Define 
\[
    \mathfrak{U} = \left(\mathfrak{U}_1, \cdots, \mathfrak{U}_d\right) \in \RR^{2d\times 2d}
\]
where
\[
    \mathfrak{U}_k = \frac{1}{\sqrt{2}}\left(\begin{array}{cc}
        v_k & v_k \\
        v_k & -v_k
    \end{array}\right) 
    \in \RR^{2d\times 2} {.}
\]
It is easy to check $\mathfrak{U}$ is an orthogonal matrix. Define
\[
    \mathfrak{U}^{\top} \mathfrak{B} \mathfrak{U} =: \mathfrak{J} 
    = \left(\begin{array}{ccc}
        J_{11} & \cdots & J_{1d} \\
        \vdots &  & \vdots \\
        J_{d1} & \cdots & J_{dd} 
    \end{array}\right)\in \RR^{2d\times 2d}
\]
with $J_{kl}\in \RR^{2\times 2}$ given by 
\begin{align*}
    J_{kl} = \mathfrak{U}^{\top}_{k} \mathfrak{B} \mathfrak{U}_{l} &= 
    \frac{1}{2}\left(\begin{array}{cc}
        v_k^{\top} & v_k^{\top} \\
        v_k^{\top} & -v_k^{\top}
    \end{array}\right)\left(\begin{array}{cc}
            0 & -I_d \\
            A & \gamma I_d
        \end{array}\right)
        \left(\begin{array}{cc}
        v_l & v_l \\
        v_l & -v_l
    \end{array}\right) \\ 
    &= \frac{1}{2}\left(\begin{array}{cc}
        v_{k}^{\top}Av_l+(\gamma-1)v_k^{\top}v_l & v_k^{\top}Av_l + (1-\gamma)v_k^{\top}v_l \\
        -v_k^{\top}Av_l+(-1-\gamma)v_k^{\top}v_l & -v_k^{\top}Av_l + (1+\gamma)v_k^{\top}v_l
    \end{array}\right)\\
    &= \frac{1}{2}\left(\begin{array}{cc}
        (\lambda_l+\gamma-1)v_k^{\top}v_l &  (\lambda_l-\gamma+1)v_k^{\top}v_l \\
        (-\lambda_l-\gamma-1)v_k^{\top}v_l &  (-\lambda_l+\gamma+1)v_k^{\top}v_l
    \end{array}\right){.}
\end{align*}
Note that $v_k^{\top}v_k = 1$ and $v_k^{\top}v_l = 0$ if $k \neq l$. 
Therefore $J_{kl} = 0$ if $k \neq l$ and 
\[
    \mathfrak{U}^{\top} \mathfrak{B} \mathfrak{U} = \mathfrak{J} 
    = \left(\begin{array}{ccc}
        J_{11} & & \\
         & \ddots & \\
         & & J_{dd}
    \end{array}\right)
\]
with
\[
   J_{kk} =  \frac{1}{2}\left(\begin{array}{cc}
        \lambda_k+\gamma-1 &  \lambda_k-\gamma+1 \\
        -\lambda_k-\gamma-1 &  -\lambda_k+\gamma+1
    \end{array}\right) {.}
\]
Then we have 
\begin{equation}\label{eqn:exp-Bt}
    \|e^{-\mathfrak{B}t}\|_2 = \|\mathfrak{U}^{\top}e^{-\mathfrak{J}t}\mathfrak{U}\|_2 = \|e^{-\mathfrak{J}t}\|_2 = \max_{1\leq k \leq d} \|\exp(-J_{kk}t)\|_2{.}
\end{equation}
Hence it is sufficient to find an upper bound for each $\|\exp(-J_{kk}t)\|_2$. 

For notational simplicity, we will drop the subscript for $J_{kk}$ and
$\lambda_k$, as the argument is identical for each $k$.  We have
\begin{equation}\label{eqn:cayley-hamilton}
    J^2 - \gamma J + \lambda I_2 = 0{,}
\end{equation}
which can be obtained by noticing that $x^2-\gamma x + \lambda$ is the
characteristic polynomial of $J$ and applying Cayley-Hamilton Theorem.
From Eq.~\eqref{eqn:cayley-hamilton} we have
\begin{equation}
    J^{n+2} = \gamma J^{n+1} - \lambda J^n, \quad \forall n{.}
\end{equation}
We now compute $\exp(-Jt)$ explicitly using the above recursion
relation.  Define the roots of the characteristic polynomial to be
\[
    \mu_{\pm} = \frac{\gamma \pm \sqrt{\gamma^2-4\lambda}}{2}{.}
\]
Note that $\mu_{\pm}$ can be complex if $\gamma^2 < 4\lambda$. We have
\begin{align*}
    J^{n+2} - \mu_{+}J^{n+1} &= \mu_{-}(J^{n+1} - \mu_{+}J^{n}), \\
    J^{n+2} - \mu_{-}J^{n+1} &= \mu_{+}(J^{n+1} - \mu_{-}J^{n}){,}
\end{align*}
then
\begin{align*}
    J^{n+1} - \mu_{+}J^{n} &= \mu_{-}^{n}(J - \mu_{+}I), \\
    J^{n+1} - \mu_{-}J^{n} &= \mu_{+}^{n}(J - \mu_{-}I){.}
\end{align*}
If $\gamma^2-4\lambda \neq 0$, then $\mu_{+}\neq \mu_{-}$ and we have 
\[
    J^{n} = \frac{1}{\mu_{+}-\mu_{-}}\left[\mu_{+}^{n}(J-\mu_{-}I) - \mu_{-}^{n}(J-\mu_{+}I)\right] {.}
\]
Then
\begin{equation}\label{eqn:exp-Jt}
    \begin{split}
        e^{-Jt} &= \sum_{n=0}^{\infty} \frac{1}{n!}(-1)^{n}t^{n}J^n \\
        &= \frac{J-\mu_{-}I}{\mu_{+}-\mu_{-}}\sum_{n=0}^{\infty} \frac{1}{n!}(-\mu_{+}t)^n - \frac{J-\mu_{+}I}{\mu_{+}-\mu_{-}}\sum_{n=0}^{\infty} \frac{1}{n!}(-\mu_{-}t)^n \\
        &= \frac{J-\mu_{-}I}{\mu_{+}-\mu_{-}}e^{-\mu_{+}t} - \frac{J-\mu_{+}I}{\mu_{+}-\mu_{-}}e^{-\mu_{-}t} \\
        &= M_{\gamma}^{(k)}e^{-\delta_\gamma^{(k)}t},
    \end{split}
\end{equation}
where
\begin{equation}
     \delta_{\gamma}^{(k)} = \begin{cases}
            \gamma/2, \quad 0 < \gamma \leq 2\sqrt{\lambda_k} {,} \\
            (\gamma - \sqrt{\gamma^2 - 4\lambda_k})/2, 
            \quad \gamma > 2\sqrt{\lambda_k} {,}
        \end{cases}
\end{equation}
and
\begin{equation}
    M_{\gamma}^{(k)} = \begin{cases}
             I\cos\left(\frac{\sqrt{4\lambda_k-\gamma^2}}{2}t\right) - \frac{2J_{kk}-\gamma I}{\sqrt{4\lambda_k - \gamma^2}}\sin\left(\frac{\sqrt{4\lambda_k-\gamma^2}}{2}t\right), \quad 0 < \gamma < 2\sqrt{\lambda_k}{,} \\
             \left(1+\frac{\gamma t}{2}\right)I + tJ_{kk}, \quad \gamma = 2\sqrt{\lambda_k}{,} \\
             I + \frac{J_{kk}-\mu_{-}I}{\sqrt{\gamma^2-4\lambda_k}}\left[\exp(-\sqrt{\gamma^2-4\lambda_k}t)-1\right], \quad \gamma > 2\sqrt{\lambda_k}{.}
        \end{cases}
\end{equation}
The case $\gamma = 2\sqrt{\lambda_k}$ can be obtained by taking the 
limit $\gamma \rightarrow 2\sqrt{\lambda_k}$ from either side. 

We now prove that there exists a constant $C>0$ independent of
$\gamma$ and $t$ such that
\begin{equation}\label{eqn:Mbound}
    \|M_{\gamma}^{(k)}e^{-\delta_\gamma^{(k)}t/2}\|_2 \leq C {.}
\end{equation}
In fact, if $\gamma > 3\sqrt{\lambda_k}$, then
$\frac{J_{kk}-\mu_{-}I}{\sqrt{\gamma^2-4\lambda_k}}$ is bounded
independently of $\gamma$, and $e^{-\sqrt{\gamma^2-4\lambda_k}t}$ is
bounded by 1. Thus $M_{\gamma}^{(k)}$ is bounded.  If
$0 < \gamma < \sqrt{\lambda_k}$, then by the fact that
$\frac{2J_{kk}-\gamma I}{\sqrt{4\lambda_k - \gamma^2}}$ is bounded
independently of $\gamma$, $M_{\gamma}^{(k)}$ is also already bounded.
Now we assume $\sqrt{\lambda_k} \leq \gamma \leq 3\sqrt{\lambda_k}$,
which means that $\gamma$, $\delta_\gamma^{(k)}$ and
$1/\delta_\gamma^{(k)}$ are all bounded so we can put all the $\gamma$
dependence in the constant $C$ and only focus on $t$-dependence.
Using the fact that $\vert \sin x/x\vert$ and
$\vert(e^{-x}-1)/x)\vert$ are both bounded by 1, and
$te^{-\delta_\gamma^{(k)}t}$ is also bounded, we can obtain the
desired estimate in~\eqref{eqn:Mbound}.

Finally, substitute Eq.~\eqref{eqn:exp-Jt} and estimate~\eqref{eqn:Mbound} into Eq.~\eqref{eqn:exp-Bt}, 
we obtain 
\[
    \|e^{-\mathfrak{B}t}\|_2 = \max_{1\leq k \leq d} \|\exp(-J_{kk}t)\|_2 \leq \max_{1\leq k \leq d} Ce^{-\delta_\gamma^{(k)}t/2} \leq Ce^{-\delta_\gamma t}{.}
\]
    
(b) According to Eq.~\eqref{eqn:Langevin_Sigmat}
\begin{align*}
 \| \mathfrak{S}_t - \mathfrak{S}_{\infty} \|_2 
 &\leq C\gamma T\int_t^{\infty} \| e^{-\mathfrak{B}s} \|_2 \| e^{-\mathfrak{B}^{\top}s} \|_2 ds \\
 &\leq C\gamma T \int_t^{\infty} e^{-2\delta_\gamma s}ds \\
 &\leq C\frac{\gamma}{\delta_\gamma}Te^{-2\delta_\gamma t} {.}
\end{align*}

\bibliographystyle{siam}
\bibliography{s0scfref}

\begin{thebibliography}{10}

\bibitem{Albaugh2016}
{\sc A.~Albaugh, H.~A. Boateng, R.~T. Bradshaw, O.~N. Demerdash, J.~Dziedzic,
  Y.~Mao, D.~T. Margul, J.~Swails, Q.~Zeng, D.~A. Case, P.~Eastman, L.~P. Wang,
  J.~W. Essex, M.~Head-Gordon, V.~S. Pande, J.~W. Ponder, Y.~Shao, C.~K.
  Skylaris, I.~T. Todorov, M.~E. Tuckerman, and T.~Head-Gordon}, {\em Advanced
  potential energy surfaces for molecular simulation}, J. Phys. Chem. B, 120
  (2016), pp.~9811--32.

\bibitem{AlbaughDemerdashHead-Gordon2015}
{\sc A.~Albaugh, O.~Demerdash, and T.~Head-Gordon}, {\em {An efficient and
  stable hybrid extended Lagrangian/self-consistent field scheme for solving
  classical mutual induction}}, J. Chem. Phys., 143 (2015), p.~174104.

\bibitem{AlbaughHead-Gordon2017}
{\sc A.~Albaugh and T.~Head-Gordon}, {\em {A New Method for Treating Drude
  Polarization in Classical Molecular Simulation}}, J. Chem. Theory Comput., 13
  (2017), pp.~5207--5216.

\bibitem{AlbaughNiklassonHead-Gordon2017}
{\sc A.~Albaugh, A.~M.N. Niklasson, and T.~Head-Gordon}, {\em {Accurate
  Classical Polarization Solution with No Self-Consistent Field Iterations}},
  J. Phys. Chem. Lett., 8 (2017), pp.~1714--1723.

\bibitem{Anderson1965}
{\sc D.~G. Anderson}, {\em {Iterative procedures for nonlinear integral
  equations}}, J. Assoc. Comput. Mach., 12 (1965), pp.~547--560.

\bibitem{BornemannSchutte1997}
{\sc F.~A. Bornemann and C.~Sch{\"u}tte}, {\em Homogenization of hamiltonian
  systems with a strong constraining potential}, Physica D, 102 (1997),
  pp.~57--77.

\bibitem{CarParrinello1985}
{\sc R.~Car and M.~Parrinello}, {\em Unified approach for molecular dynamics
  and density-functional theory}, Phys. Rev. Lett., 55 (1985), pp.~2471--2474.

\bibitem{Demerdash2014}
{\sc O.~Demerdash, E.~H. Yap, and T.~Head-Gordon}, {\em Advanced potential
  energy surfaces for condensed phase simulation}, Annu. Rev. Phys. Chem., 65
  (2014), pp.~149--74.

\bibitem{HairerLubichWanner2006}
{\sc E.~Hairer, C.~Lubich, and G.~Wanner}, {\em Geometric numerical
  integration: structure-preserving algorithms for ordinary differential
  equations}, Springer-Verlag Berlin Heidelberg, second~ed., 2006.

\bibitem{HairerNorsettWanner1987}
{\sc E.~Hairer, S.~P. N{\o}rsett, and G.~Wanner}, {\em {Solving ordinary
  differential equation I: nonstiff problems}}, vol.~8, Springer, 1987.

\bibitem{HairerPavliotis2008}
{\sc M.~Hairer and G.~A. Pavliotis}, {\em From ballistic to diffusive behavior
  in periodic potentials}, J. Stat. Phys., 131 (2008), pp.~175--202.

\bibitem{HohenbergKohn1964}
{\sc P.~Hohenberg and W.~Kohn}, {\em {Inhomogeneous electron gas}}, Phys. Rev.,
  136 (1964), pp.~B864--B871.

\bibitem{Hormander1961}
{\sc L.~H{\"o}rmander}, {\em Hypoelliptic differential operators}, Ann. Inst.
  Fourier, 11 (1961), pp.~477--492.

\bibitem{KohnSham1965}
{\sc W.~Kohn and L.~Sham}, {\em {Self-consistent equations including exchange
  and correlation effects}}, Phys. Rev., 140 (1965), pp.~A1133--A1138.

\bibitem{Laury2015}
{\sc M.~L. Laury, L.~P. Wang, V.~S. Pande, T.~Head-Gordon, and J.~W. Ponder},
  {\em Revised parameters for the amoeba polarizable atomic multipole water
  model}, J Phys Chem B, 119 (2015), pp.~9423--9437.

\bibitem{LeimkuhlerMatthews2015}
{\sc B.~Leimkuhler and C.~Matthews}, {\em Molecular Dynamics}, Springer-Verlag
  New York, 2015.

\bibitem{LeimkuhlerMatthewsStoltz2015}
{\sc B.~Leimkuhler, C.~Matthews, and G.~Stoltz}, {\em The computation of
  averages from equilibrium and nonequilibrium langevin molecular dynamics},
  Ima J. Numer. Anal., 36 (2015).

\bibitem{LinLuShao2014}
{\sc L.~Lin, J.~Lu, and S.~Shao}, {\em Analysis of the time reversible
  {Born-Oppenheimer} molecular dynamics}, Entropy (Special issue on Molecular
  Dynamics Simulation), 16 (2014), pp.~110--137.

\bibitem{Martin2004}
{\sc R.~Martin}, {\em Electronic Structure -- Basic Theory and Practical
  Methods}, Cambridge Univ. Pr., West Nyack, {NY}, 2004.

\bibitem{Niklasson2008}
{\sc A.~M.~N. Niklasson}, {\em {Extended Born-Oppenheimer molecular dynamics}},
  Phys. Rev. Lett., 100 (2008), p.~123004.

\bibitem{Niklasson2012}
{\sc Anders M.~N. Niklasson and Marc~J. Cawkwell}, {\em {Fast method for
  quantum mechanical molecular dynamics}}, Phys. Rev. B, 86 (2012), p.~174308.

\bibitem{Niklasson2009}
{\sc A.~M.~N. Niklasson, P.~Steneteg, A.~Odell, N.~Bock, M.~Challacombe, C.~J.
  Tymczak, E.~Holmstr\"{o}m, G.~Zheng, and V.~Weber}, {\em {Extended Lagrangian
  Born-Oppenheimer molecular dynamics with dissipation}}, J. Chem. Phys., 130
  (2009), p.~214109.

\bibitem{Niklasson2006}
{\sc A.~M.~N. Niklasson, C.~J. Tymczak, and M.~Challacombe}, {\em
  {Time-reversible Born-Oppenheimer molecular dynamics}}, Phys. Rev. Lett., 97
  (2006), p.~123001.

\bibitem{PardouxVerrtennikov2001}
{\sc E.~Pardoux and A.~Yu. Verrtennikov}, {\em On the poisson equation and
  diffusion approximation. i}, Ann. Probab., 29 (2001), pp.~1061--1085.

\bibitem{PardouxVerrtennikov2003}
\leavevmode\vrule height 2pt depth -1.6pt width 23pt, {\em On poisson equation
  and diffusion approximation 2}, Ann. Probab., 31 (2003), pp.~1166--1192.

\bibitem{PardouxVerrtennikov2005}
\leavevmode\vrule height 2pt depth -1.6pt width 23pt, {\em On poisson equation
  and diffusion approximation 3}, Ann. Probab., 33 (2005), pp.~1111--1133.

\bibitem{Pavliotis2014}
{\sc G.~A. Pavliotis}, {\em Stochastic Processes and Applications},
  Springer-Verlag New York, first~ed., 2014.

\bibitem{PavliotisStuart2008}
{\sc G.~A. Pavliotis and A.~M. Stuart}, {\em Multiscale Methods},
  Springer-Verlag New York, 2008.

\bibitem{Talay2002}
{\sc D~Talay}, {\em Stochastic hamiltonian systems: Exponential convergence to
  the invariant measure, and discretization by the implicit euler scheme},
  Markov Process Relat., 8 (2002), pp.~163--198.

\bibitem{ReaxFF}
{\sc A.~C~T Van~Duin, Siddharth Dasgupta, Francois Lorant, and William~A.
  Goddard}, {\em Reaxff: A reactive force field for hydrocarbons}, Journal of
  Physical Chemistry A, 105 (2001), pp.~9396--9409.

\bibitem{Villani2009}
{\sc C.~Villani}, {\em Hypocoercivity}, Mem. Amer. Math. Soc., 202 (2009).

\bibitem{Wang2005}
{\sc Wei Wang and Robert~D. Skeel}, {\em Fast evaluation of polarizable
  forces}, The Journal of Chemical Physics, 123 (2005), p.~164107.

\bibitem{Wilcox1967}
{\sc R.~M. Wilcox}, {\em Exponential operators and parameter differentiation in
  quantum physics}, J. Math. Phys., 8 (1967), p.~962.

\end{thebibliography}

\end{document}